\numberwithin{equation}{section}
\newtheorem{thm}{Theorem}[section]
\newtheorem{prop}[thm]{Proposition}
\newtheorem{lem}[thm]{Lemma}
\newtheorem{remark}[thm]{Remark}
\newtheorem{definition}[thm]{Definition}
\newcommand {\qd} {\quad}
\newcommand \pt {\partial}
\newcommand \grad{\nabla}
\newcommand \gam{\gamma}
\newcommand \R{\mathbb{R}}
\newcommand \Om{\Omega}
\newcommand \der{\partial}
\newcommand \mcl{\mathcal}
\newcommand \Gam{\Gamma}
\newcommand \alp{\alpha}
\newcommand \tx{\text}
\newcommand \til{\tilde}
\newcommand \p {\prime}
\newcommand \ol{\overline}
\newcommand \bu {{\bf u}}
\newcommand \ul {\underline }
\newcommand \eps {\varepsilon}
\newcommand \nn {\mathcal A}
\newcommand \shock{\Gamma_s}
\newcommand \bmnu{{\bm\nu}_s}
\newcommand \bmtau{{\bm\tau}_s}
\def\mE{\mathcal{E}}
\title[Transonic shocks of E-P system in convergent nozzles] %Use the shortened version of the full title
      {Radial transonic shock solutions of Euler-Poisson system in convergent nozzles}
\author[Myoungjean Bae and Yong Park]{}
\subjclass{34A12, 34A34, 76H05,  76L05, 76N10, 76X05}
 \keywords{Euler-Poisson system, compressible, supersonic, subsonic,transonic shock, radial flow, convergent nozzle, exit pressure, electric field, monotonicity.}
 \email{mjbae@postech.ac.kr}
 \email{pipablue@postech.ac.kr}
\thanks{The first author is supported in part by Samsung Science and Technology Foundation
under Project Number SSTF-BA1502-02. The second author supported in part under the framework of international cooperation program managed by National Research Foundation of Korea(NRF-2015K2A2A2002145).
}
\thanks{$^*$ Corresponding author: xxxx}
\begin{document}

\maketitle

% Enter the first author's name and address:
\centerline{\scshape Myoungjean Bae$^*$}
\medskip
{\footnotesize
% please put the address of the first author
 \centerline{Department of Mathematics,
         POSTECH, Pohang, Gyungbuk, 37673, Republic of Korea,}
          \centerline{
         Korea Institute for Advanced Study
85 Hoegiro, Dongdaemun-gu,
Seoul 02455,
Republic of Korea}
} % Do not forget to end the {\footnotesize by the sign }

\medskip

\centerline{\scshape Yong Park}
\medskip
{\footnotesize
 % please put the address of the second  and third author
 \centerline{Department of Mathematics,
         POSTECH, Pohang, Gyungbuk, 37673, Republic of Korea,}
}

\bigskip

% The name of the associate editor will be entered by an editorial staff
% "Communicated by the associate editor name" is not needed for special issue.
 \centerline{(Communicated by )}

\begin{abstract}
Given constant data of density $\rho_0$, velocity $-u_0{\bf e}_r$, pressure $p_0$ and electric force $-E_0{\bf e}_r$ for supersonic flow at the entrance, and constant pressure $p_{\rm ex}$ for subsonic flow at the exit, we prove that Euler-Poisson system admits a unique transonic shock solution in a two dimensional convergent nozzle, provided that $u_0>0$, $E_0>0$, and that $E_0$ is sufficiently large depending on $(\rho_0, u_0, p_0)$ and the length of the nozzle.

%In this paper, we study a radial transonic shock of the Euler-Poisson system in convergent nozzles. If a large electric field is imposed on the entrance of nozzles, a radial transonic shock solution of the Euler-Poisson system behaves like a radial transonic shock solution of the Euler system in divergent nozzles in that the pressure on the exit of nozzles monotonically decreases as a shock location approaches to the exit.
\end{abstract}
\section{Introduction}
The steady Euler-Poisson system
\begin{equation}
\left\{
\begin{array}
[c]{l}%
\text{div}(\rho\mathbf{u})=0,\\
\text{div}(\rho\mathbf{u}\otimes\mathbf{u})+\nabla p=\rho\nabla\Phi,\\
\text{div}(\rho\mE \bu +p\bu)=\rho\mathbf{u}\cdot\nabla\Phi,\\
\Delta\Phi=\rho-\tilde{b},
\end{array}
\right.  \label{ep}%
\end{equation}
describes the motion of electrons governed by self-generated electric field in macroscopic scale. In \eqref{ep}, $\bu,
\rho$, $p$, and $\mE$ represent the macroscopic particle velocity, density,
pressure, and the total energy density, respectively. And, $\Phi$ represents the electric potential generated by the
Coulomb force of particles. The function $\tilde{b}>0$ is fixed, and represents the density of positively charged background ions. In this work, we consider ideal polytropic gas for which the pressure $p$ and the energy density $\mE$  are given by
\begin{equation}
\label{2d-1-b1}
p(\rho, S)=e^S {\rho^{\gam}}\quad \text{and}\quad
\mE({\bf u}, \rho, S)= \frac 12|{\bf u}|^2+\frac{e^S \rho^{\gam-1}}{\gam-1},
\end{equation}
respectively. Here, the constant $\gam>1$ is called the {adiabatic exponent}, and $S>0$ represents the entropy.
\smallskip

The goal of this work is to construct a family of radial transonic shock solutions to \eqref{ep} in a two dimensional convergent nozzle, and to study various analytical features especially including the monotonicity property of the pressure at the exit with respect to shock location, provided that the magnitude of electric field at the entrance is fixed sufficiently large.
\smallskip

\begin{definition}[A shock solution of E-P system]
\label{definition-shock-sol}
Let $\Om\subset \R^2$ be an open and connected set, and suppose that a $C^1$ curve $\shock$ divides $\Om$ into two open and connected sub-domains $\Om^+$ and $\Om^-$ so that $\Om^+\cup \shock \cup \Om^-=\Om$. Let ${\bmnu}$ be the unit normal vector field on $\shock$ oriented into $\Om^+$, and let $\bmtau$ be a tangent vector field on $\shock$.
We call ${\bf U}:=(\rho,\bu,p, \Phi)\in [L^\infty_{\rm{loc}}(\Om)\cap C^0(\ol{\Om^\pm})\cap C^1(\Om^\pm)]^4\times [W^{1,\infty}_{\rm{loc}}(\Om) \cap C^1(\ol{\Om^\pm})\cap C^2(\Om^\pm)]$ with $\rho>0$ in $\ol{\Om}$ a {\emph{shock solution}} to \eqref{ep} in $\Om$ with a shock $\shock$ if ${\bf U}$ satisfies \eqref{ep} pointwisely in $\Om^{\pm}$, and satisfies the following {\emph{extended Rankine-Hugoniot jump conditions}}
\begin{align}\label{rhb}
&[\rho \bu\cdot\bmnu]_{\shock}=[\bu\cdot\bmtau]_{\shock}=
[\rho(\bu\cdot \bmnu)^2+p]_{\shock}=[B]_{\shock}=0,\\
\label{rh-Phi}
&[\Phi]_{\shock}=[\grad\Phi\cdot\bmnu]_{\shock}=0,
\end{align}
for the Bernoulli function $B$ given by
$$B=\frac 1 2|\bu|^2+\frac{\gam p}{(\gam-1)\rho}=
\frac 1 2|\bu|^2+\frac{\gam e^{S}\rho^{\gam-1}}{\gam-1}.
$$
In \eqref{rhb}--\eqref{rh-Phi}, $[F]_{\shock}$ is defined by
$[F({\rm x})]_{\shock}:=F({\rm x})|_{\ol{\Om^-}}-F({\rm x})|_{\ol{\Om^+}}$ for ${\rm x}\in \shock$.
\end{definition}
One can easily extend Definition \ref{definition-shock-sol} to the case of $\Om\subset \R^n$ with $n\ge 3$ through replacing a $C^1$ curve $\shock$ and a tangent vector field $\bmtau$ on $\shock$ by a $C^1$ (hyper)surface $\shock$ and tangent vector fields $\{\bmtau^{(j)}\}_{j=1}^{n-1}$ on $\shock$ with $\{\bmtau^{(j)}\}_{j=1}^{n-1}$ being linearly independent at each point on $\shock$, respectively.

\begin{definition}[Admissibility of a shock solution]
\label{definition-admissibility}
Let $(\rho_{\pm}, {\bu}_{\pm}, p_{\pm}, \Phi_{\pm})$ denote $(\rho, {\bu}, p, \Phi)$ restricted on $\Om^{\pm}$, respectively.
A shock solution ${\bf U}$ with a shock $\shock$ is said {\emph{physically admissible}} if
\begin{equation*}
\begin{split}
  &0<{\bu_+}\cdot\bmnu<{\bu_-}\cdot \bmnu\quad\tx{on}\,\,\shock,\\
  \tx{or equivalently}\quad &{\bu_+}\cdot \bmnu>0, {\bu_-}\cdot\bmnu>0\,\,\tx{and}\,\,S_+>S_-\quad\tx{on}\,\,\shock
    \end{split}
\end{equation*}
for $\ln S=\frac{p}{\rho^{\gam}}$.
\end{definition}
Depending on the {\emph{Mach number}} $M=\frac{|{\bu}|}{c(\rho, S)}$ for $c(\rho,S)=\sqrt{\gam e^{S}\rho^{\gam-1}}$, analytic and physical features of \eqref{ep} vary.
If $M>1$, then the corresponding state ${\bf U}=(\rho, {\bu}, p, \Phi)$ is called {\emph{supersonic}}. If $M<1$, on the other hand, then the corresponding state is called ${\emph{subsonic}}$. Here, $c(\rho, S)$ is called the {\emph{local sound speed}}.
\begin{definition}[Transonic shock solution]
\label{definition-transonic-shock}
A shock $\shock$ is called a {\emph{transonic shock}} if $M$ jumps from the state of $M>1$ to the state of $M<1$ across $\shock$.
\end{definition}
\begin{figure}[htp]
\centering
\begin{psfrags}
\psfrag{r0}[cc][][0.7][0]{$\phantom{aaaaaaaaaaaaaa}\Gam_{\rm ent}: M_0>1, E_0>0$}
\psfrag{rs}[cc][][0.7][0]{$r_s$}
\psfrag{r1}[cc][][0.7][0]{$\phantom{aaaaaaaa}\Gam_{ex}:p=p_{\rm ex}$}
\psfrag{eo}[cc][][0.7][0]{$\phantom{aaa}-E_{-}{\hat{\bf r}}$}
\psfrag{ei}[cc][][0.7][0]{$\phantom{aaa}-E_{+}{\hat{\bf r}}$}
\psfrag{ui}[cc][][0.7][0]{$-u_{+}{\hat{\bf r}}\phantom{a}$}
\psfrag{uo}[cc][][0.7][0]{$-u_{-}{\hat{\bf r}}$}
\psfrag{sub}[cc][][0.7][0]{$M_+<1$}
\psfrag{super}[cc][][0.7][0]{$M_->1$}
\psfrag{sh}[cc][][0.7][0]{\phantom{a}${\bf{\shock:\tx{shock}}}$}
\includegraphics[scale=0.6]{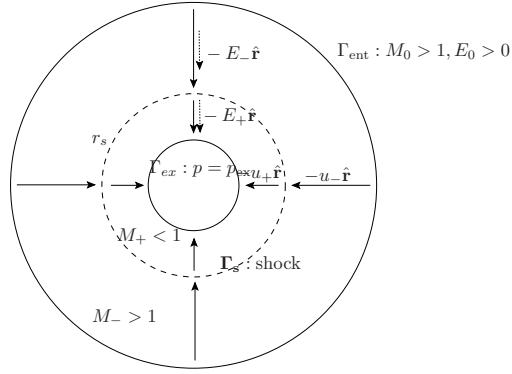}
\caption{Transonic shock of E-P system in a convergent domain($E_{\pm}>0, u_{\pm}>0$)}\label{figure1}
\end{psfrags}
\end{figure}
The goal of this work is to show that, given constant data of $(\rho_0, -u_0{\bf e}_r, p_0)$ and electric force $\nabla\Phi=-E_0{\bf e}_r$ for supersonic flow at the entrance $\Gam_{\rm{ent}}$, and constant pressure $p_{\rm ex}$ for subsonic flow at the exit $\Gam_{\rm{ex}}$, the system \eqref{ep} admits a unique radial transonic shock solution in a two dimensional annulus of finite radius, provided that $\rho_0$, $u_0$, $p_0$, $E_0$ and $p_{\rm{ex}}$ are positive, and that $E_0$ is sufficiently large depending on $(\rho_0, u_0, p_0)$ and the length of the nozzle. See Figure \ref{figure1}.
\medskip

\begin{figure}[htp]
\centering
\begin{psfrags}
\psfrag{px}[cc][][0.7][0]{$\phantom{aaa}\Gam_{ex}:p=p_{\rm ex}$}
\psfrag{rs}[cc][][0.7][0]{$r_s$}
\psfrag{r1}[cc][][0.7][0]{$\phantom{a}r_{\rm{in}}$}
\psfrag{eo}[cc][][0.7][0]{$\phantom{aaa}-E_{-}{\hat{\bf r}}$}
\psfrag{ei}[cc][][0.7][0]{$\phantom{aaa}-E_{+}{\hat{\bf r}}$}
\psfrag{ui}[cc][][0.7][0]{${\bf u}=u_{-}{\hat{\bf r}}\phantom{a}$}
\psfrag{uo}[cc][][0.7][0]{${\bf u}=u_{+}{\hat{\bf r}}$}
\psfrag{sub}[cc][][0.7][0]{$M_+<1$}
\psfrag{sup}[cc][][0.7][0]{$M_->1$}
\psfrag{m0}[cc][][0.7][0]{$M_0>1\phantom{a}$}
\psfrag{sh}[cc][][0.7][0]{\phantom{a}${\shock}$}
\psfrag{ul}[cc][][0.7][0]{${\bf u}=u_-{\bf e}_x$}
\psfrag{ur}[cc][][0.7][0]{${\bf u}=u_+{\bf e}_x$}
\includegraphics[scale=1.0]{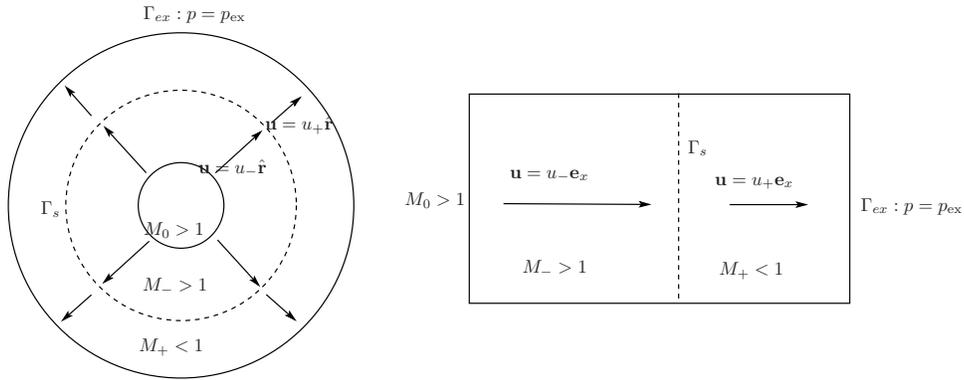}
\caption{Transonic shocks of Euler system(left: well-posed, right: ill-posed)}\label{figure2}
\end{psfrags}
\end{figure}
In \cite{HYuan}, by nonlinear ODE analysis of Euler system in two different domains (a flat nozzle, and a divergent nozzle) in $\R^2$, it is shown that
the geometry of a domain plays a key role to decide analytic behavior of transonic shock solutions. In a divergent nozzle, for fixed entrance data of supersonic flow and for fixed constant pressure at the exit, it is proved that steady Euler system
\begin{equation}
\label{euler-system}
\begin{cases}
\text{div}(\rho\mathbf{u})=0,\\
\text{div}(\rho\mathbf{u}\otimes\mathbf{u})+\nabla p=0,\\
\text{div}(\rho\mE \bu +p\bu)=0
\end{cases}
\end{equation}
admits a unique radial transonic shock solution, provided that the exit pressure is in a certain range for subsonic flow. In a flat nozzle, on the other hand, a transonic shock problem of Euler system with constant boundary data has either no solution or infinitely many solutions. See Figure \ref{figure2}.
Overall, the geometry of a domain determines whether a transonic shock problem of Euler system with fixed exit pressure is well-posed or not.

\begin{figure}[htp]
\centering
\begin{psfrags}
\psfrag{px}[cc][][0.7][0]{$\phantom{aaaaa}\Gam_{ex}:p=p_{\rm ex}$}
\psfrag{rs}[cc][][0.7][0]{$r_s$}
\psfrag{r1}[cc][][0.7][0]{$\phantom{a}r_{\rm{in}}$}
\psfrag{el}[cc][][0.7][0]{$\nabla\Phi=E_{-}{{\bf e}_x}$}
\psfrag{er}[cc][][0.7][0]{$\nabla\Phi=E_{+}{\bf e}_x$}
\psfrag{ui}[cc][][0.7][0]{${\bf u}=u_{-}{\hat{\bf r}}\phantom{a}$}
\psfrag{uo}[cc][][0.7][0]{${\bf u}=u_{+}{\hat{\bf r}}$}
\psfrag{sub}[cc][][0.7][0]{$M_+<1$}
\psfrag{sup}[cc][][0.7][0]{$M_->1$}
\psfrag{m0}[cc][][0.7][0]{$M_0>1, E_0>0\phantom{aaaaa}$}
\psfrag{sh}[cc][][0.7][0]{\phantom{a}${\shock}$}
\psfrag{ul}[cc][][0.7][0]{${\bf u}=u_-{\bf e}_x$}
\psfrag{ur}[cc][][0.7][0]{${\bf u}=u_+{\bf e}_x$}
\includegraphics[scale=1.0]{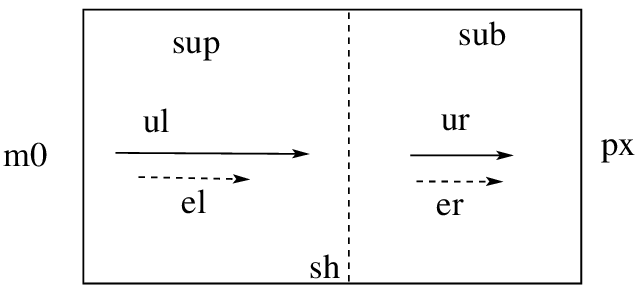}
\caption{Transonic shock of E-P system in a flat nozzle (conditionally well-posed) }\label{figure3}
\end{psfrags}
\end{figure}

Interestingly, \cite{LRXX} and \cite{LX} reveal that if the background charge density $b$ is less than the sonic density, then one dimensional transonic shock solutions of steady isentropic Euler-Poisson system defined in a flat nozzle have a similar feature to radial transonic shock solutions of Euler system in a divergent nozzle. More specifically, for fixed constant entrance data for supersonic flow and constant pressure for subsonic flow at the exit in a flat nozzle, steady isentropic Euler-Poisson system admits a unique one dimensional transonic shock solution, provided that the electric field at the entrance acts in the same direction as the entrance flow velocity. Also, one can directly check that the same is true for the system \eqref{ep}. See Figure \ref{figure3}.
This conditional well-posedness of a transonic shock problem of Euler-Poisson system with a fixed exit subsonic pressure shows that the self-generated electric field $\nabla\Phi$ with $\nabla\Phi\cdot{\bf u}>0$ in the system \eqref{ep} has the same effect of the geometry of a divergent nozzle in Figure \ref{figure2}. This naturally raises a question on {\emph{the well-posedness of a transonic shock problem of compressible Euler-Poisson system in a convergent nozzle with a fixed exit subsonic pressure  under a strong effect of electric field}. And, it is our goal to establish the well-posedness of radial transonic shock problem of \eqref{ep} in a convergent domain under a strong effect of self-generated electric field. If this well-posedness of a transonic shock problem in a convergent nozzle is achieved, then one can expect to  establish the dynamical stability of a transonic shock of the system \eqref{ep} in a convergent nozzle. In \cite{TLiu}, it has been shown that a one-dimensional transonic shock of Euler system in a convergent nozzle is dynamically unstable. Therefore, the results in this paper show that self-generated force in compressible flow can overcome the geometry of domain to stabilize a certain physical feature of the flow.

\section{Radial transonic shock solutions of \eqref{ep} in a convergent domain}

Let $(r,\theta)$ be polar coordinates in $\R^2$. For fixed constants $r_0>r_1>0$, define an annular domain
\begin{equation}
\label{def-annulus}
\nn:=\{{\rm x}\in \R^2:r_1<|{\rm x}|<r_0\}.
\end{equation}
Set
\begin{equation*}
\Gam_0:=\der\nn\cap\{|{\rm x}|=r_0\},\quad \Gam_1:=\der\nn\cap\{|{\rm x}|=r_1\}.
\end{equation*}
In \eqref{ep}, assume that $\tilde b$ is in $C^1(\ol{\nn})$ and $\tilde b=\tilde b(r)$ with
\begin{equation*}
  \|\tilde b\|_{C^1(\ol{\nn})}\le b_0
\end{equation*}
for some constant $b_0>0$.
Fix $\gam>1$. For positive constants $(\rho_0, u_0, p_0, E_0)$, we prescribe boundary conditions as follows:
\begin{equation}\label{BCs-enterance}
(\rho, {\bf u}, p, \nabla \Phi)=(\rho_0, -u_0{\bf e}_r, p_0, -E_0 {\bf e}_r)\quad\tx{on}\,\,\Gam_0
\end{equation}
for ${\bf e}_r=\frac{\rm x}{|{\rm x}|}$. Since \eqref{ep} and \eqref{BCs-enterance} are invariant under a coordinate rotation, we set as
\begin{equation*}
  (\rho, {\bf u}, p, \nabla\Phi)({\rm x})=(\tilde{\rho}(r), \tilde{u}(r) {\bf e}_r, \tilde{p}(r), \tilde{E}(r){\bf e}_r ),
\end{equation*}
so that \eqref{ep} and \eqref{BCs-enterance} are rewritten as
\begin{equation}\label{rdeq}
\begin{split}
&\begin{cases}
\frac{d}{dr}(r\tilde{\rho}\tilde{u})=0\\
\frac{d}{dr}(r\tilde{\rho}\tilde{u}^2)+r\frac{d\tilde{p}}{dr}=r\tilde{\rho}\tilde{E}\\
\frac{d}{dr}(r\tilde{\rho}\tilde{u}{B})=r\tilde{\rho}\tilde{u}\tilde{E}\\
\frac{d}{dr}(r\tilde{E})=r(\tilde{\rho}-\tilde{b}(r))
\end{cases}\quad\tx{for}\,\,r_1<r<r_0,\\
&(\tilde{\rho}, \tilde{u}, \tilde{p}, \tilde{E})(r_0)=(\rho_0, -u_0, p_0, -E_0).
\end{split}
\end{equation}
As we seek solutions flowing in the direction of $-{\bf e}_r$ in $\nn$, it is convenient to introduce new variables
\begin{equation}\label{cv}
(t,\hat t)=(r_0-r,r),
\end{equation}
\tx{and to set}
\begin{equation*}
(\rho, u, p, E, b)(t):=(\tilde \rho,-\tilde u,\tilde p, -\tilde E, \tilde b)(r).
\end{equation*}
Then \eqref{rdeq} is equivalent to the following initial value problem for $(\rho, u, p, E)(t)$:
\begin{equation}
\label{equation-t}
\begin{split}
&\begin{cases}
(\hat t{\rho}{u})'=0\\
(\hat{t}{\rho}{u}^2)'+\hat t{p}'=\hat t {\rho}{E}\\ (\hat t {\rho}{u}{B})'=\hat t{\rho}{u}{E}\\
(\hat t{E})'=\hat t({\rho}-{b})
\end{cases}\quad\tx{for}\,\,0<t<r_0-r_1(:=T),\\
&({\rho}, {u}, {p}, {E})(0)=(\rho_0, u_0, p_0, E_0),
\end{split}
\end{equation}
with
\begin{equation}
\label{defi-Bernoulli}
B=\frac 12 u^2+\frac{\gam p}{(\gam-1)\rho},
\end{equation}
where we denote $\frac{d}{dt}$ by $'$.
%To represent the direction of the flow and electric field in $\eqref{rd}$ more clearly, we use the following transformation \begin{align}\label{cv}
%&(t,\vtheta)=(r_0-r,\theta)\qd \tx{and}\nonumber \\
%&(\tilde \rho,\tilde u,\tilde p, \tilde E)(t)=(\rho(r_0-t),-u(r_0-t),p(r_0-t),-E(r_0-t)).
%\end{align}

%with $0<r<\infty$ and $0\le \theta\le 2\pi$.
 %We assume that $b(x)=b(r)$ and that a solution of $\eqref{ep}$ depends only on $r$, so that $(\rho,\bu,p,\Phi)(x)=(\rho(r),u(r)\hat r,p(r),\Phi(r))$ for $\hat r =(\cos\theta,\sin\theta)$. Since we are interested in a non-vacuum and non-zero velocity solution, we also assume that $\rho>0$ and $|u|>0$.
%Under these assumptions, $\eqref{ep}$ is reduced to the following equations:

The R-H conditions \eqref{rhb}--\eqref{rh-Phi} for radial solutions of \eqref{ep} in terms of $(\rho, u, p, E)$ become
\begin{align}\label{rhr}
[\rho u]_{\Gam_s}=[\rho u^2+p]_{\Gam_s}=[B]_{\Gam_s}=[E]_{\Gam_s}=0,
\end{align}
where $\shock$ is given as
\begin{equation*}
\shock=\{t=t_s\}\quad\tx{for some $t_s\in (0, T)$.}
\end{equation*}

%Note that the second and third equation of $\eqref{rdeq}$ implies that

From the second and third equation of \eqref{equation-t}, one can directly derive that $S$ from \eqref{2d-1-b1} satisfies
\begin{align}\label{sp}
S'=0.
\end{align}
This and the first equation in \eqref{equation-t} yield
%where $S=\frac{p}{\rho^\gam}$ defined in ...  and $\p$ denotes a partial derivative with respect to $r$. Thus, if $(\rho,u,p,\Phi)\in (C^1)^3\times C^2$, then, by the first equation of $\eqref{rdeq}$ and $\eqref{sp}$,
\begin{align}\label{m0s0}
(\hat{t}\rho u,e^S) =(m_0,\kappa_0)\qd\tx{on}\,\,[0, T] \quad\tx{for}\,\,
(m_0,\kappa_0)=(r_0\rho_0u_0,\,\,\frac{p_0}{\rho_0^{\gam}}).
\end{align}
%for some constants $m_0$ and $S_0$. Hereafter we treat $E(r):=\pt_r\Phi(r)$ as a variable of $\eqref{rdeq}$.
%Then, the third equation and the fourth of $\eqref{rdeq}$ is written as
From \eqref{m0s0} and the definition $M^2=\frac{u^2}{\gam p/\rho}$, it is directly derived that
\begin{equation}
\label{cToM}
c^2=\gam \kappa_0\rho^{\gam-1}=
\gam\kappa_0\left(\frac{m_0^2}{\gam\kappa_0}\right)^{\frac{\gam-1}{\gam+1}}
\left(\frac{1}{\hat t^2 M^2}\right)^{\frac{\gam-1}{\gam+1}}=:\frac{1}{\mu_0}\left(\frac{1}{\hat t^2 M^2}\right)^{\frac{\gam-1}{\gam+1}}.
\end{equation}
By \eqref{m0s0}, the third equation in \eqref{equation-t} can be simplified as
\begin{align}
\label{bp} B^\p=E.
\end{align}
%and
%\begin{align}
%\label{re}(rE)^\p=r(\rho-b).
%\end{align}
By  $\eqref{m0s0}$  and \eqref{cToM}, equation $\eqref{bp}$ can be written as a nonlinear equation for $(M,E)$ as follows:
\begin{align}\label{odeM}
(M^2)'=\frac{M^2}{(M^2-1)}\left((\gam+1)\mu_0 E (\hat t^2M^2)^{\frac{\gam-1}{\gam+1}}-\frac{1}{\hat t}(2+(\gam-1)M^2)\right)
=:h_1(t, M, E, \kappa_0).
%(\gam+1)\frac{M^2}{r}\frac{rE+K}{u^2-c^2}
\end{align}
We solve \eqref{cToM} for $\rho$ to get
\begin{equation}
\label{rM-expression}
\rho=\left(\frac{1}{\gam\kappa_0\mu_0}\right)^{\frac{1}{\gam-1}}\left(\frac{1}{\hat t^2M^2}\right)^{\frac{1}{\gam+1}}
=:\mu_1\left(\frac{1}{\hat t^2M^2}\right)^{\frac{1}{\gam+1}},
\end{equation}
then substitute this expression into the last equation in \eqref{equation-t} to get
\begin{equation}
\label{new-equation-E}
(\hat{t}E)'=\hat t\left(\mu_1\left(\frac{1}{\hat t^2M^2}\right)^{\frac{1}{\gam+1}}-b(t)\right)=:h_2(t, M, E, \kappa_0).
\end{equation}
Note that the constants $\mu_0$ and $\mu_1$ vary depending on $(\rho_0, u_0, p_0)$, but they are independent of $E_0$.

By \eqref{m0s0}, \eqref{odeM} and \eqref{new-equation-E},  if $M\neq 1$ and $\rho u>0$ for $0<t<T$, then \eqref{equation-t} is equivalent to
\begin{equation}
\label{system-simplified1}
\begin{cases}
(\hat{t}\rho u,e^S) =(m_0,\kappa_0),\\
(M^2)'=h_1(t, M, E,\kappa_0),\\
(\hat t E)'=h_2(t, M, E, \kappa_0),
\end{cases}
\end{equation}
which consists of two algebraic equations and a first order nonlinear ODE system for $(M, E)$.
In \eqref{odeM} and \eqref{new-equation-E}, we represent $(h_1,h_2)$ as functions varying with respect to $\kappa_0$. This is because, we will consider the system \eqref{system-simplified1} for different values of $\kappa_0$, while $m_0$ is fixed same always due to the R-H conditions \eqref{rhr}.

For later use, we also note that \eqref{system-simplified1} is equivalent to
\begin{equation}
\label{rd}
\begin{cases}
(\hat{t}\rho u,e^S) =(m_0,\kappa_0),\\
\rho'=g_1(t, \rho, E, \kappa_0),\\
(\hat t E)'=g_2(t, \rho, E),
\end{cases}
\end{equation}
for
\begin{equation}
\label{def-gs}
(g_1, g_2)(t, \rho, E, \kappa_0)=\left(\frac{\rho(\hat tE-\frac{m_0^2}{\hat t^2\rho^2})}{\hat t(\gam \kappa_0\rho^{\gam-1}-\frac{m_0^2}{\hat t^2\rho^2})}, \hat t(\rho-b(t))\right).
\end{equation}

Fix $\gam>1$, and suppose that the initial condition $({\rho}, {u}, {p}, {E})(0)=(\rho_0, u_0, p_0, E_0)$ in \eqref{equation-t} satisfies $M_0^2:=\frac{u_0^2}{(\gam p_0/\rho_0)}>1$. And, suppose that \eqref{equation-t} has a $C^1$ solution $(\rho_-, u_-, p_-, E_-)$ on the interval $[0,T]$ with
\begin{equation}
\label{assumption-incsol}
\rho_->0,\quad u_->0,\quad \tx{and} \quad M_-=\frac{u_-}{\sqrt{\gam \kappa_0\rho_-^{\gam-1}}} >1\quad\tx{for}\,\,0\le t \le T.
\end{equation}
The unique existence of such a $C^1$ solution $(\rho_-, u_-, p_-, E_-)$ to  \eqref{equation-t} is discussed in the next section.

For a fixed constant $t_s(=:r_0-r_s, r_s\in(r_1, r_0))\in (0, T)$, we construct a radial shock solution  $(\rho, u, p, E)(t;t_s)$ of \eqref{ep} with incoming state $(\rho_-, u_-, p_-, E_-)$, and with a shock $\shock=\{t=t_s\}$ in the sense of Definition \ref{definition-shock-sol}.
For each $t_s\in(0,T)$, the shock solution $(\rho, u, p, E)(t;t_s)$ is represented as
\begin{equation}
\label{shocksol-rep}
(\rho, u, p, E)(t;t_s)=
\begin{cases}
(\rho_-, u_-, p_-, E_-)(t)\quad\tx{for $0\le t<t_s$}\\
(\rho_+, u_+, p_+, E_+)(t;t_s)\quad\tx{for $t_s\le t\le T$}
\end{cases},
\end{equation}
for $(\rho_+, u_+, p_+, E_+)(t;t_s)$ defined as follows:
\begin{itemize}
\item[(1)] Set $(\rho_s,u_s,p_s,E_s):=(\rho_+,u_+,p_+,E_+)(t_s;t_s)$. A direct computation using \eqref{rhr} yields
\begin{align}\label{rhc}
\begin{array}{l}
(\rho_s, u_s, p_s, E_s)
=(\frac{\rho_-u_-^2}{K_-}, \frac{K_-}{u_-}, \rho_- u_-^2+p_--\rho_-K_-, E_-)
\end{array}
\end{align}
for $K_-=\frac{2(\gam-1)}{\gam+1}(\frac 12 u_-^2+\frac{\gam p_-}{(\gam-1)\rho_-})$ where $(\rho_-,u_-,p_-,E_-)$ are evaluated at $\shock=\{t=t_s\}$.

\item[(2)]
Set
\begin{equation}
\label{definition-kappa}
\kappa_s:=\frac{p_s}{\rho_s^{\gam}},
\end{equation}
and solve
\begin{equation}
\label{system-downstream}
\begin{cases}
(\hat{t}\rho u,e^S) =(m_0,\kappa_s)\\
(M^2)'=h_1(t, M^2, E,\kappa_s)\\
(\hat t E)'=h_2(t, M^2, E, \kappa_s)
\end{cases},\quad\tx{or equivalently}\quad
\begin{cases}
(\hat{t}\rho u,e^S) =(m_0,\kappa_s)\\
\rho'=g_1(t, \rho, E, \kappa_s)\\
(\hat t E)'=g_2(t, \rho, E)
\end{cases}
\end{equation}
with initial condition \eqref{rhc}
for $t_s<t\le T$. If \eqref{system-downstream} has a unique $C^1$ solution with $M\neq 1$ for $t_s\le t\le T$, we denote the solution by $(\rho_+, u_+, p_+, E_+)(t;t_s)$ .
\end{itemize}
By the assumption \eqref{assumption-incsol}, we get from \eqref{rhc} that $(\rho_s, u_s, p_s)$ are strictly positive, and that
\begin{equation}
\label{inequality-rhadm}
0<u_s(t_s)<u_-(t_s),\quad\tx{and}\quad M_s^2(t_s)-1=-\frac{p_-(t_s)}{p_s(t_s)}(M_-^2(t_s)-1)<0,
\end{equation}
for $(M^2_s, M^2_-)(t_s):=(\frac{u_s^2}{(\gam p_s/\rho_s)}, \frac{u_-^2}{(\gam p_-/\rho_-)})(t_s)$. This indicates that, for each $t_s\in(0, T)$,
the shock solution $(\rho, u, p, E)(t;t_s)$ is {\emph{admissible}} in the sense of Definition \ref{definition-admissibility}, and that $\shock=\{t=t_s\}$ is a {\emph{transonic shock}} in the sense of Definition \ref{definition-transonic-shock}.

Our main interest is in the radial shock solutions of \eqref{ep} which behave similar to the ones of Euler system in a divergent nozzle(Figure \ref{figure2}). Therefore, we define  a particular class of shock solutions as follows:
\begin{definition}[Radial transonic shock solution of \eqref{ep} with positive direction of electric field]
\label{definition-admissible-sol}
For a fixed $t_s\in(0, T)$, we define a shock solution $(\rho, u, p, E)(t;t_s)$ to be a {\emph{radial transonic shock solution of \eqref{ep} with positive direction of electric field}} if it satisfies the following conditions:
\begin{itemize}
\item[(i)] $(\rho, u, p, E)(t;t_s)$ are strictly positive for $0\le t\le T$;
\item[(ii)] the Mach number $M_+(t;t_s)=\frac{u_+(t;t_s)}{\sqrt{(\gam p_+(t;t_s)/\rho_+(t;t_s))}}$ downstream satisfies
\begin{equation}
\label{condition-subsonic}
M_+^2(t;t_s)<1\quad\quad\tx{for $t_s\le t\le T$.}
\end{equation}
\end{itemize}
\end{definition}
%The existence of shock solutions satisfying all the properties stated in Definition \ref{definition-admissible-sol} will be proved in the next section. In this section, we discuss about a sufficient condition to guarantee the unique existence of radial transonic shock solutions of \eqref{ep} for given incoming supersonic flow upstream, and for fixed exit subsonic pressure in a convergent domain, and this is our main goal.

Our main theorem is stated below.

\begin{thm} \label{thmshock}
Fix two constants $\gam>1$ and $r_0>0$. And, fix a $C^1$ function $b(t)(=\til b(r))$ with $b(t)>0$ for $t\in[0,r_0]$ and $\|b\|_{C^1([0, r_0])}\le b_0$ for some constant $b_0>0$. Fix positive constant data $(\rho_0, u_0, p_0)$ with $M_0(=\sqrt{\frac{u_0^2}{\gam p_0/\rho_0}})>1$.
\begin{itemize}
\item[(a)] There exists $\underline{E}>0$ depending on $(\gam, r_0, b_0, \rho_0, u_0, p_0)$ such that whenever $E_0\ge \underline{E}$, a family of transonic shock solution $(\rho, u, p, E)(t;t_s)$ in the form of \eqref{shocksol-rep} is uniquely given on the interval $I_T=[0, T]$ with satisfying
\begin{equation}
\label{tshock-ic}
(\rho, u, p, E)(0)=(\rho_0, u_0, p_0, E_0),
\end{equation}
and
\begin{equation}
\label{p-monot}
	\frac{dp_+}{dt_s}(T;t_s)<0\quad\tx{for all $t_s\in[0, T]$},
	\end{equation}
where $T$ is sufficiently small depending on $(\gam, r_0, b_0, \rho_0, u_0, p_0, E_0)$.	
\item[(b)] For $\gam\ge 2$, if $r_1\in(0, r_0)$ satisfies $\ln \frac{r_0}{r_1}<\frac{\gam+1}{2(\gam-1)}$, then there exists $E_*>0$ depending on $(\gam, r_0, r_1, b_0, \rho_0, u_0, p_0)$ such that whenever $E_0\ge E_*$, a family of transonic shock solution $(\rho, u, p, E)(t;t_s)$ is uniquely given on the interval $I_{T_*}:=[0, r_0-r_1]$ with satisfying \eqref{tshock-ic} and \eqref{p-monot}.

\end{itemize}
\end{thm}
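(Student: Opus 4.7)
The plan is to build the transonic shock family in three stages---upstream supersonic profile, shock jump, downstream subsonic profile---and then derive the exit-pressure monotonicity by linearizing the downstream system with respect to the shock location.

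\textbf{Upstream and downstream construction.} First I would show that \eqref{system-simplified1} with initial data $(\rho_0,u_0,p_0,E_0)$ and $M_0>1$ admits a unique $C^1$ solution $(\rho_-,u_-,p_-,E_-)$ on $[0,T]$ satisfying \eqref{assumption-incsol}. The key point is that in \eqref{odeM}, when $E_0$ is large the bracket $(\gam+1)\mu_0 E(\hat t^2M^2)^{(\gam-1)/(\gam+1)}-\frac{1}{\hat t}(2+(\gam-1)M^2)$ is positive, which together with $M^2-1>0$ forces $(M^2)'>0$, so $M_-$ stays supersonic and standard ODE well-posedness produces the upstream profile. For each $t_s\in(0,T)$ I would use \eqref{rhc} to define the admissible subsonic state $(\rho_s,u_s,p_s,E_s)(t_s)$ (cf.~\eqref{inequality-rhadm}) and solve \eqref{system-downstream} forward on $[t_s,T]$; now $M^2-1<0$ combines with the still-positive bracket to give $(M^2)'<0$, so the solution remains strictly subsonic, and ODE theory yields the unique downstream profile $(\rho_+,u_+,p_+,E_+)(t;t_s)$ and hence the family \eqref{shocksol-rep}. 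The smallness of $T$ is chosen to absorb the drift of $E$ and $\rho$ from their initial values so that the large-$E$ mechanism survives on the whole interval.

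\textbf{Exit-pressure monotonicity.} This is the heart of the matter and I expect it to be the main obstacle. Writing $p_+(T;t_s)=\kappa_s(t_s)\,\rho_+(T;t_s)^\gam$ with $\kappa_s(t_s)=p_s(t_s)/\rho_s(t_s)^\gam$ determined by the upstream state at $t_s$, I would differentiate in $t_s$ and treat the two resulting contributions separately. For the variational term $\gam\kappa_s\rho_+^{\gam-1}\,\partial_{t_s}\rho_+$ I linearize the reformulated system \eqref{rd} (with $\kappa=\kappa_s$) to obtain a linear $2\times2$ ODE on $[t_s,T]$ for $(\partial_{t_s}\rho_+,\partial_{t_s}E_+)$, with initial datum
\begin{equation*}
\partial_{t_s}\rho_+(t_s;t_s)=\rho_s'(t_s)-g_1(t_s,\rho_s(t_s),E_s(t_s),\kappa_s(t_s))
\end{equation*}
(and an analogous identity for $\partial_{t_s}E_+$) obtained by differentiating the jump formulas \eqref{rhc}, together with an inhomogeneity $\partial_{\kappa_s}g_1\cdot\kappa_s'(t_s)$ coming from the $\kappa_s$-dependence of the coefficients. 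The explicit form of $g_1$ in \eqref{def-gs}, combined with the subsonic bound $\gam\kappa_s\rho_+^{\gam-1}>m_0^2/(\hat t^2\rho_+^2)$, controls the sign of each factor in the linear system; $\kappa_s'(t_s)$ itself is signed through the Rankine--Hugoniot relations and the already-established monotonicity $(M_-^2)'>0$. Under the large-$E_0$ and small-$T$ assumptions these competing signs assemble into the strict inequality \eqref{p-monot}. Keeping careful track of these signs uniformly on $[t_s,T]$ is the delicate part.

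\textbf{Part (b): extension to the full nozzle.} The additional hypotheses $\gam\geq 2$ and $\ln(r_0/r_1)<\frac{\gam+1}{2(\gam-1)}$ would let one replace the smallness of $T$ by uniform a priori estimates on $\rho$ and $E$ across the whole interval $[0,r_0-r_1]$. The logarithmic threshold arises naturally when integrating the drift of $\hat tE$ governed by \eqref{new-equation-E} against the singular factor $1/\hat t$ appearing in \eqref{odeM}: it is precisely what keeps the bracket in \eqref{odeM} positive and prevents $M$ from touching $1$ either from above upstream or from below downstream before reaching the exit. Once such uniform bounds are established by a bootstrap argument using \eqref{rM-expression} and \eqref{new-equation-E}, the construction and linearized monotonicity argument of part (a) carry over verbatim, with $E_*$ chosen so that $E$ remains bounded away from zero on $[0,T_*]$.
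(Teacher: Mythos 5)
Your overall architecture (supersonic upstream for large $E_0$, Rankine--Hugoniot jump, downstream subsonic ODE, linearization in $t_s$) matches the paper's, and your linearized system for $(\partial_{t_s}\rho_+,\partial_{t_s}E_+)$ with the initial datum $\rho_s'(t_s)-g_1(t_s,\rho_s,E_s,\kappa_s)$ is exactly the paper's Lemma \ref{lemrho}. But there is a genuine gap at the step you yourself flag as delicate. Writing $p_+(T;t_s)=\kappa_s(t_s)\rho_+(T;t_s)^\gamma$ and differentiating gives
\begin{equation*}
\frac{dp_+}{dt_s}(T;t_s)=\kappa_s'(t_s)\,\rho_+^{\gamma}(T;t_s)+\gamma\kappa_s(t_s)\rho_+^{\gamma-1}(T;t_s)\,\frac{\partial\rho_+}{\partial t_s}(T;t_s),
\end{equation*}
and the paper proves $\kappa_s'(t_s)>0$ (Lemma \ref{lemS}) while $\partial_{t_s}\rho_+(T;t_s)<0$ (Lemma \ref{lemrho}): the two terms have \emph{opposite} signs, and neither large $E_0$ nor small $T$ gives you an obvious way to show the negative term dominates. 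Saying the competing signs ``assemble'' is precisely the missing argument; a direct magnitude comparison of these two terms is not carried out anywhere and is not easy.

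The paper sidesteps this competition with the Bernoulli function. One writes $B_+(T;t_s)=G(p_+(T;t_s),\kappa_s(t_s))$ as in \eqref{Beqn}, where $G_{p_+}=(1-M_+^2)/\rho_+>0$ by subsonicity and $G_{\kappa_s}>0$, and then exploits the exact identity $B_+(T;t_s)=B_-(t_s)+\int_{t_s}^{T}E_+(\eta;t_s)\,d\eta$ coming from $B'=E$ and $[B]_{\Gamma_s}=0$. Differentiating the latter in $t_s$ (the boundary terms cancel by \eqref{rhc}) gives $\frac{d}{dt_s}B_+(T;t_s)=\int_{t_s}^T\partial_{t_s}E_+\,d\eta<0$, since $\partial_{t_s}E_+<0$ follows from Lemma \ref{lemrho} via $(\hat tY)'=\hat tX$. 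Then $G_{p_+}\frac{dp_+}{dt_s}=\frac{d}{dt_s}B_+-G_{\kappa_s}\kappa_s'$ is a sum of two \emph{negative} quantities, and \eqref{p-monot} follows with no cancellation to control. You would need to either adopt this Bernoulli-based bookkeeping or supply a quantitative comparison that your sketch does not contain. A secondary, smaller gap: in part (b) the hypothesis $\gamma\ge 2$ is not merely for uniform bounds on $E$; it is used specifically to close the differential inequality $0<\rho_+'\le C\rho_+^{3-\gamma}$ preventing finite-time blow-up of the downstream density, and the log-threshold enters through the pointwise lower bound $F_s(t_s)\ge\beta_1$ on $\hat tE_+-m_0^2/(\hat t^2\rho_+^2)$, verified by a two-regime analysis in $t_s$ that does not carry over ``verbatim'' from part (a).
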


\begin{remark}
  According to Theorem \ref{thmshock}, given incoming supersonic radial flow of Euler-Poisson system in an annular domain $\nn$, a transonic shock problem with a fixed subsonic pressure on the inner boundary $\Gam_{\rm ex}$ of $\nn$(see Figure \ref{figure1}) is {\emph{conditionally well-posed}} in the sense that if the magnitude of the electric field is sufficiently strong, then there exists a unique radial transonic shock solution of \eqref{ep}.
\end{remark}

The following proposition is the key ingredient to prove Theorem \ref{thmshock}:
\begin{prop}\label{promp}
Suppose that a shock solution $(\rho, u, p, E)(t;t_s)$ in \eqref{shocksol-rep} is a {\emph{radial transonic shock solution with positive direction of electric field}} for all $t_s\in[0, T]$, in the sense of Definition \ref{definition-admissible-sol}. In addition, assume that
\begin{itemize}
\item[(i)] $M_-^2=\frac{u_-^2}{(\gam p_-/\rho_-)}$ satisfies
\begin{equation}
\label{condition-M-mt}
\frac{dM_-^2}{dt}>0\quad\tx{for $0 < t < T$};
\end{equation}
\item[(ii)] for each $t_s\in(0, T)$, $\rho_+(t;t_s)$ satisfies
\begin{equation}
\label{condition-rho-mt}
\frac{d\rho_+}{dt}(t;t_s)>0\quad\tx{for all $t_s < t < T$}.
\end{equation}
\end{itemize}
Then, we have
\begin{equation*}
%\label{p-monot}
\frac{dp_+}{dt_s}(T;t_s)<0\quad\tx{for all $t_s\in(0, T)$}.
\end{equation*}
\end{prop}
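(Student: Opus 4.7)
My plan is to exploit the fact that the downstream flow is isentropic ($e^{S_+}=\kappa_s(t_s)$), so $p_+(T;t_s)=\kappa_s(t_s)\rho_+(T;t_s)^\gamma$, and therefore, with overdot denoting $d/dt_s$,
\[
\frac{dp_+}{dt_s}(T;t_s)=\dot\kappa_s\,\rho_+(T;t_s)^\gamma+\gamma\kappa_s\rho_+(T;t_s)^{\gamma-1}\,\dot\rho_+(T;t_s).
\]
Proving \eqref{p-monot} therefore reduces to the quantitative estimate $\dot\rho_+(T;t_s)<-\rho_+(T;t_s)\dot\kappa_s/(\gamma\kappa_s)$. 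My first step is to confirm $\dot\kappa_s>0$: the jump conditions \eqref{rhc} give $\kappa_s=\kappa_0\,\Phi(M_-^2(t_s))$ for the explicit rational function
\[
\Phi(y)=\frac{\bigl(2\gamma y-(\gamma-1)\bigr)\bigl((\gamma-1)y+2\bigr)^{\gamma}}{(\gamma+1)^{\gamma+1}\,y^{\gamma}},
\]
a standard calculation yields $\Phi'(y)>0$ for $y>1$, and assumption (i) then supplies $\dot\kappa_s>0$.

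Next, I will compute $\dot B_+(T;t_s)$ two different ways. Integrating $B_+'=E_+$ on $[t_s,T]$ with $B_+(t_s;t_s)=B_-(t_s)$ and differentiating in $t_s$, the boundary contribution $B_-'(t_s)-E_+(t_s;t_s)=E_-(t_s)-E_-(t_s)=0$ vanishes, leaving $\dot B_+(T;t_s)=\int_{t_s}^T\dot E_+(\tau;t_s)\,d\tau$. Integrating $(\hat t E_+)'=\hat t(\rho_+-b)$ on $[t_s,\tau]$, differentiating in $t_s$, and simplifying via the analogous upstream identity gives
\[
\hat\tau\,\dot E_+(\tau;t_s)=\hat t_s\bigl(\rho_-(t_s)-\rho_s(t_s)\bigr)+\int_{t_s}^\tau\hat\sigma\,\dot\rho_+(\sigma;t_s)\,d\sigma,
\]
whose first summand is strictly negative by Rankine-Hugoniot. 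Substituting and switching the order of integration,
\[
\dot B_+(T;t_s)=\hat t_s\bigl(\rho_-(t_s)-\rho_s(t_s)\bigr)\ln\frac{\hat t_s}{r_1}+\int_{t_s}^T\hat\sigma\,\ln\frac{\hat\sigma}{r_1}\,\dot\rho_+(\sigma;t_s)\,d\sigma,
\]
with negative first summand and nonnegative kernel on $[t_s,T]$. On the other hand, differentiating $B_+=\tfrac12 u_+^2+\tfrac{\gamma\kappa_s}{\gamma-1}\rho_+^{\gamma-1}$ at $t=T$ directly (using $u_+=m_0/(\hat t\rho_+)$ and $c_+^2=\gamma\kappa_s\rho_+^{\gamma-1}$) gives
\[
\dot B_+(T;t_s)=\frac{c_+^2(T)\bigl(1-M_+^2(T)\bigr)}{\rho_+(T;t_s)}\,\dot\rho_+(T;t_s)+\frac{c_+^2(T)}{(\gamma-1)\kappa_s}\,\dot\kappa_s,
\]
whose coefficient of $\dot\rho_+(T;t_s)$ is strictly positive by subsonicity. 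Equating the two expressions isolates $\dot\rho_+(T;t_s)$ modulo the integral of $\dot\rho_+$ over $[t_s,T]$.

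The main obstacle will be to close this integral identity by controlling the sign of $\dot\rho_+$ on the interior of the downstream interval. My plan is to couple it with the linearized form of \eqref{rd}: differentiating \eqref{rd} in $t_s$ produces a $2\times2$ linear system for $(\eta,\zeta):=(\dot\rho_+,\dot E_+)$ in which the only inhomogeneity is $(\partial_{\kappa_s}g_1)\dot\kappa_s=-g_1\dot\kappa_s/\bigl(\kappa_s(1-M_+^2)\bigr)$, and this inhomogeneity is strictly negative precisely because assumption (ii) asserts $g_1=\rho_+'>0$ on $[t_s,T]$. Combined with the strictly negative Rankine-Hugoniot initial sensitivity $\zeta(t_s)=\rho_-(t_s)-\rho_s(t_s)<0$, these negative drivers should force $(\eta,\zeta)$ to remain negative throughout $[t_s,T]$; the kernel-weighted integral above is then nonpositive, and matching the two expressions for $\dot B_+(T;t_s)$ delivers the required bound $\dot\rho_+(T;t_s)<-\rho_+(T;t_s)\dot\kappa_s/(\gamma\kappa_s)$. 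Rigorously propagating negativity of $(\eta,\zeta)$ through this linearized system---the step where assumption (ii) enters essentially through the sign of $\partial_{\kappa_s}g_1$---is the technical heart of the argument.
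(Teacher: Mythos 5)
Your proposal follows essentially the same route as the paper's: the monotonicity $\frac{d\kappa_s}{dt_s}>0$ via the explicit Rankine--Hugoniot function of $M_-^2$ (this is Lemma \ref{lemS}, your $\Phi$ being the paper's $f$), the negativity of $\frac{\partial\rho_+}{\partial t_s}$ and hence of $\frac{\partial E_+}{\partial t_s}$ on $[t_s,T]$ via the linearized system (Lemma \ref{lemrho}), and two computations of $\frac{d}{dt_s}B_+(T;t_s)$. The only real difference is bookkeeping at the end: you write $p_+=\kappa_s\rho_+^{\gam}$ and aim for the quantitative bound $\dot\rho_+(T)<-\rho_+\dot\kappa_s/(\gam\kappa_s)$, whereas the paper writes $B_+=G(p_+,\kappa_s)$ with $G_{p_+}=(1-M_+^2)/\rho_+>0$ and $G_{\kappa_s}>0$, so that $\frac{d}{dt_s}B_+<0$ together with $\dot\kappa_s>0$ yields $\dot p_+(T)<0$ in one line. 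Your variant also closes, since $(\gam-1)(1-M_+^2)<\gam$ converts the inequality you extract from $\dot B_+(T)<0$ into exactly the required bound, but it is an avoidable extra step; also note your explicit double integral with kernel $\hat\sigma\ln(\hat\sigma/r_1)$ is unnecessary --- the simpler fact $\dot E_+<0$ on $[t_s,T]$ already gives $\dot B_+(T)<0$.

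One ingredient is genuinely missing from the plan. To propagate negativity of $\eta=\dot\rho_+$ through the linearized system you need not only the negative inhomogeneity $(\partial_{\kappa_s}g_1)\dot\kappa_s<0$ and $\zeta(t_s)=\rho_-(t_s)-\rho_s(t_s)<0$, but also the sign of the \emph{initial} value $\eta(t_s;t_s)=\frac{d\rho_s}{dt_s}-g_1(t_s,\rho_s,E_s,\kappa_s)$; if this were positive, your continuation argument could not even start. This quantity is not controlled by the density jump alone: the paper computes it explicitly in \eqref{def-Xs} as $-\frac{2\gam^2\kappa_0}{(\gam+1)^2}\frac{\rho_-^{\gam}(t_s)}{u_s^2(t_s)}\frac{M_-^2-1}{M_-^2}(M_-^2)'\big|_{t=t_s}<0$, so its negativity is a second, independent use of assumption (i) beyond $\dot\kappa_s>0$. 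With that computation supplied, and the first-crossing-time/integrating-factor argument carried out as you sketch it (this is exactly Steps 2--3 of the proof of Lemma \ref{lemrho}), your proof is complete.
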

Before proving Proposition \ref{promp}, a few preliminary lemmas need to come first.
\begin{lem} \label{lemS}
Under the same assumptions as Proposition \ref{promp}, $\kappa_s=\kappa_s(t_s)$ defined by \eqref{definition-kappa} satisfies
\begin{align}\label{sprove}
\frac{d \kappa_s(t_s)}{d t_s}>0\quad\tx{for all $t_s\in(0,T)$.}
\end{align}

%Suppose that, for any $t_s\in [0,r_0-r_1]$, a radial transonic shock solution satisfies $M_->1$ and $M_-^\p >0$ in $\ov{\N^-}$.
%Then, for any $t_s\in [0,r_0-r_1]$, $S_+(t_s)>S_0$ where $S_0:=\frac{p_0}{\rho_0^\gam}>0$ and
	%\begin{align}\label{sprove}
	%\frac{d S_+(t_s)}{d t_s}>0
	%\end{align}
	\begin{proof}
		By \eqref{rhc} and \eqref{definition-kappa}, we get
		$
		\kappa_s(t_s)=\left.\frac{\rho_-u_-^2+p_--\rho_-K_-}{\left({\rho_-u_-^2}/{K_-}\right)^\gam}\right|_{t=t_s}.
		$
		By using the definition $M_-^2=\frac{u_-^2}{(\gam p_-/\rho_-)}$, we rewrite $\kappa_s$ as
		\begin{align}
		\label{sMain}\kappa_s(t_s)=f(M_-^2(t_s))\kappa_0,
		\end{align}
		for
		\begin{align*}
		f(x)=\frac{1}{\gam+1}(2\gam x-(\gam-1))\left(\frac{\gam-1}{\gam+1}+\frac{2}{\gam+1}\frac{1}{x}\right)^\gam.
		\end{align*}
		Note that $f(1)=1$, and
		\begin{align}
		\label{fdiff}f^\p(x)=\frac{2\gam(\gam-1)}{(\gam+1)^2}\left(\frac{\gam-1}{\gam+1}+\frac{2}{\gam+1}\frac{1}{x}\right)^{\gam-1}\left(\frac{1}{x}-1\right)^2>0\qd \tx{for} \qd x>0.
		\end{align}
		Since $f(M_-^2(t_s))>1$ for all $t_s\in(0, T)$ by \eqref{assumption-incsol}, we obtain from
		\eqref{sMain} that
		$$
		\kappa_s(t_s)>\kappa_0\quad\tx{for all $t_s\in(0, T)$}.
		$$
		To prove $\eqref{sprove}$, we differentiate $\eqref{sMain}$ with respect to $t_s$ to get
		\begin{align}
		\label{sdiff}
		\frac{d\kappa_s(t_s)}{d t_s}=f' (M_-^2(t_s))\frac{d M_-^2(t_s)}{d t_s}\kappa_0.
		\end{align}
Then, \eqref{sprove} is directly obtained from  \eqref{condition-M-mt} and \eqref{fdiff}.	
			
\end{proof}
\end{lem}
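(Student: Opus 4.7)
The plan is to express $\kappa_s(t_s)$ as an explicit function of the upstream Mach number $M_-^2$ evaluated at the shock location, and then exploit the monotonicity hypothesis \eqref{condition-M-mt} together with the classical entropy-production identity for shocks. Starting from \eqref{rhc} and the definition $\kappa_s=p_s/\rho_s^\gam$, I would substitute
\[
\rho_s=\frac{\rho_-u_-^2}{K_-},\qquad p_s=\rho_-u_-^2+p_--\rho_-K_-,
\]
with $K_-=\tfrac{2(\gam-1)}{\gam+1}B_-$, and then use $u_-^2=M_-^2\,\gam p_-/\rho_-$ to eliminate $u_-$ in favor of $M_-^2$ and $\kappa_0$ (here $\kappa_0=p_-/\rho_-^{\gam}$ is a constant by \eqref{sp}--\eqref{m0s0}). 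The aim is to arrive at a representation of the form $\kappa_s(t_s)=f(M_-^2(t_s))\,\kappa_0$ for some smooth function $f$ of a single variable, depending only on $\gam$.

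Once this factorization is in hand, the rest is essentially calculus. By the chain rule,
\[
\frac{d\kappa_s}{dt_s}=f'\!\bigl(M_-^2(t_s)\bigr)\,\frac{dM_-^2}{dt}(t_s)\,\kappa_0,
\]
so the claim reduces to checking that $f'(x)>0$ for $x>1$ (since $M_-^2>1$ by \eqref{assumption-incsol}), which then combines with \eqref{condition-M-mt} to give \eqref{sprove}. Verifying positivity of $f'$ is the classical statement that entropy strictly increases across a compressive shock as a function of upstream Mach number, and I would expect $f'(x)$ to factor as a nonnegative rational expression times $(x-1)^2$ or $(1-1/x)^2$, so that it vanishes only at the sonic value $x=1$ and is strictly positive otherwise.

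The main obstacle I anticipate is purely algebraic: the substitution produces a somewhat unwieldy expression of the form
\[
f(x)=\frac{1}{\gam+1}\bigl(2\gam x-(\gam-1)\bigr)\left(\frac{\gam-1}{\gam+1}+\frac{2}{(\gam+1)x}\right)^{\!\gam},
\]
and differentiating it requires using the product rule and then simplifying so that the non-sign-definite factors cancel. The risk is that the derivative does not obviously factor into a manifestly positive form; one must be careful to collect terms and recognize the perfect square structure, rather than leave it as a difference of large terms. Checking $f(1)=1$ along the way is a useful sanity test, since it corresponds to the degenerate case of a sonic "shock" across which there is no jump. Note that the assumption \eqref{condition-rho-mt} about downstream density is not needed for this lemma; only the upstream monotonicity \eqref{condition-M-mt} and the admissibility of the shock enter, which makes the lemma a clean preparatory step before handling the downstream dependence in Proposition \ref{promp}.
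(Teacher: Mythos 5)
Your proposal follows the paper's proof essentially verbatim: the same factorization $\kappa_s(t_s)=f(M_-^2(t_s))\kappa_0$ with the identical $f$, the same chain-rule reduction to positivity of $f'$, and the same anticipated perfect-square structure $f'(x)=\frac{2\gam(\gam-1)}{(\gam+1)^2}\left(\frac{\gam-1}{\gam+1}+\frac{2}{(\gam+1)x}\right)^{\gam-1}\left(\frac{1}{x}-1\right)^2$, which is exactly what the paper computes in \eqref{fdiff}. The argument is correct and your observation that only \eqref{condition-M-mt}, not \eqref{condition-rho-mt}, is needed here matches the paper's usage.
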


Under the assumption of \eqref{assumption-incsol}, $(\rho_s, u_s, p_s, E_s)$ are $C^1$ with respect to $t_s\in(0, T)$. And, $(h_1, h_2)(t, M^2, E, \kappa_s)$ (or, $(g_1, g_2)(t, \rho, E, \kappa_s)$) are $C^1$ with respect to $(t, M^2, E, \kappa_s)$ (or, $(t, \rho, E, \kappa_s)$), for $(h_1, h_2, g_1,g_2)$ defined by \eqref{odeM}, \eqref{new-equation-E}, \eqref{def-gs}. Therefore, $(\rho_+, u_+, p_+, E_+ )(t;t_s)$ are $C^1$ with respect to $t_s\in(0, t]$ for each $t\in(0, T)$.
\begin{lem}
	\label{lemrho}
	Under the same assumptions as Proposition \ref{promp}, for each fixed $\bar t\in(0, T)$, $\rho_+(\bar t;t_s)$ satisfies
	\begin{align*}
	\frac{\partial\rho_+}{\partial t_s}(\bar t;t_s)<0\qd\tx{for $t_s\in(0, \bar t]$}.
	\end{align*}
	\begin{proof}
	In this proof, we use the second formulation in \eqref{system-downstream}.
	
(Step 1)	Fix $\bar t\in (0,T]$. For each $t_s\in (0, \bar t]$, we set as
	\begin{equation*}
	\mathfrak{g}_1(t;t_s):=g_1(t, \rho_+(t;t_s), E_+(t;t_s), \kappa_s(t_s)).
	\end{equation*}
	By integrating $\frac{d}{d\eta}\rho_+(\eta;t_s)=\mathfrak{g}_1(\eta;t_s)$ with respect to $\eta$ over the interval $[t_s, t]$ for some $t\in[t_s, \bar t]$, we obtain that
	\begin{align}\label{rho+}
		\rho_{+}\left( t;t_{s}\right)  =\rho_{s}(t_s)+\int_{t_{s}}^{t} \mathfrak{g}_1(\eta;t_s)\,d\eta		
		\end{align}
		where $\rho_s$ is given in $\eqref{rhc}$. Take the partial derivative of $\rho_+(t;t_s)$ with respect to $t_s$ to get		\begin{equation}\label{rho+rs}
		\begin{split}
		\frac{\partial \rho_{+}}{\partial t_{s}}\left(  t;t_s\right)
		=
		&\frac
		{d\rho_{s}(t_s)}{d t_{s}}-g_1(t_s, \rho_s(t_s), E_s(t_s), \kappa_s(t_s))\\
		&+\int_{t_{s}}^{t} \frac{\partial g_1(\eta, \rho_+(\eta; t_s), E_+(\eta; t_s), \kappa_s(t_s))}
		{\partial{(\rho, E, \kappa_s)}}		
		\cdot \frac{\partial}{\partial t_s}(\rho_+(\eta; t_s), E_+(\eta; t_s), \kappa_s(t_s))\,d\eta.
		\end{split}
		\end{equation}
		A direct computation using \eqref{def-gs} shows that
		\begin{equation*}
		\begin{split}
		&\frac{\partial g_1( t,\rho_+, E_+, \kappa_s)}
		{\partial{(E, \kappa_s)}}=\frac{(\rho_+^{2-\gam}, -\gam\rho_+')}{\gam\kappa_s(1-M_+^2)}=:(a_1,a_2)(t;t_s),\\
		&\frac{\partial g_1( t,\rho_+, E_+, \kappa_s)}
		{\partial{\rho}}=(\ln \rho_+)'\frac{(2-\gam)-3M_+^2}{1-M_+^2}+\frac{2M_+^2}{\hat t(1-M_+^2)}=:a_3(t;t_s).
		\end{split}
\end{equation*}
By the conditions \eqref{condition-subsonic} and \eqref{condition-rho-mt}, one can choose a positive constant $\lambda_0>1$ depending on $(\gam, \rho_0, u_0, p_0, E_0, T)$ to satisfy
		\begin{equation}\label{abc}
		\begin{split}
		&\frac{1}{\lambda_0}\le \frac{\partial g_1( t,\rho_+, E_+, \kappa_s)}{\partial E}, -\frac{\partial g_1(t, \rho_+, E_+, \kappa_s)}{\partial\kappa_s}\le \lambda_0,\\
		&|\frac{\partial g_1( t, \rho_+, E_+, \kappa_s)}
		{\partial{\rho}}|\le \lambda_0
		\end{split}
		\end{equation}
		for all $t_s\in[0, T]$ and $t\in[t_s, T]$. One can further adjust $\lambda_0>0$ so that Lemma $\ref{lemS}$ yields
		\begin{align}\label{d}
			\frac{d \kappa_s(t_s)}{d t_s}    \ge \frac{1}{\lambda_0} \quad\tx{for all $t_s\in[0, T]$}.
		\end{align}
		
(Step 2) Set $X(t;t_s):=\frac{\partial \rho_+}{\partial t_s}(t;t_s)$. By $\eqref{rho+rs}$, $X$ becomes a solution to
		\begin{equation}\label{xr}
		\frac{d X}{d t}=a_3X+a_2\frac{d\kappa_s(t_s)%
		}{d t_{s}}+a_1\frac{\partial E_{+}(t;t_s)}{\partial t_{s}},
		\end{equation}
		A direct computation using \eqref{def-gs}, \eqref{assumption-incsol} and \eqref{rhc} yields that
		\begin{equation}
		\label{def-Xs}
		\begin{split}
		X(t_s;t_s)
		&=\frac{d\rho_{s}(t_s)}{d t_{s}}-g_1(t_s, \rho_s(t_s), E_s(t_s), \kappa_s(t_s))\\
		&=\left.-\frac{2\gam^2\kappa_0}{(\gam+1)^2}\frac{\rho_-^{\gam}(t_s)}{u_s^2(t_s)} \frac{M_-^2-1}{M_-^2} (M_-^2)'\right|_{t=t_s}<0\quad\tx{for all $t_s\in[0, T]$.}
		\end{split}
\end{equation}
Since $X(t;t_s)$ is $C^1$ for $t\in[t_s, T]$, there exists a small constant $\varepsilon>0$ such that $X(t;t_s)<0$ for all $t\in[t_s, t_s+\varepsilon]$.
Let $t^*\in(t_s, \bar{t}]$ be the smallest value of $t$ such that $X(t;t_s)<0$ for $t<t^*$, and $X(t;t_s)\ge 0$ for $t>t^*$. Then,
\begin{equation}
\label{def-tstar}
X(t^*;t_s)=0.
\end{equation}

(Step 3) For further estimate of $X(t;t_s)$, we get back to the equation $(\hat t E)'=g_2(t,\rho, E)$ in \eqref{system-downstream}. We integrate this equation with respect to $t$ over the interval, then take the partial derivative of the resultant equation with respect to $t_s$ to get, 		
\begin{align}\label{Ers}
		\frac{\pt E_+}{\pt t_s}(t;t_s)
		=\frac{1}{r_0-t}\left((r_0-t_s)(\rho_--\rho_s)+\int_{t_s}^t(r_0-\eta)X(\eta;t_s)\,  d\eta\right).
		\end{align}
		From this, we obtain that
		\begin{align}\label{Ers}
		(\hat t Y)'(t;t_s)=\hat t X(t;t_s), \quad Y(t_s;t_s)=(\rho_--\rho_s)(t_s)		\quad\tx{for $Y(t;t_s):=\frac{\pt E_+}{\pt t_s}(t;t_s)$.}
		\end{align}
		By  step 2, we get from \eqref{rhc}, \eqref{inequality-rhadm} and \eqref{Ers} that
		\begin{equation}
			\label{e1}
		\frac{\pt E_+}{\pt t_s}(t;t_s)\le (\rho_--\rho_s)(t_s)<0\quad\tx{for all $t\in[t_s, t^*]$.}		
		\end{equation}

By $\eqref{abc}$--$\eqref{xr}$ and $\eqref{e1}$, there exists a constant $\lambda_1>0$ such that
\begin{equation*}
\frac{dX}{dt}-a_1X\le -\lambda_1\quad\tx{for $t\in[t_s, t^*]$.}
\end{equation*}
By the simple method of integrating factor, we obtain that
\begin{equation*}
\mu(t^*)X(t^*;t_s)\le X(t_s;t_s)-\lambda_1\int_{t_s}^{t^*} \mu(\eta)\,d\eta\le X(t_s;t_s)<0
\end{equation*}
for $\mu(\eta):=\exp(-\int_{t_s}^{\eta} a_1(\til{\eta};t_s)\,d\til{\eta})$. But this contradicts to \eqref{def-tstar}. Therefore, we finally conclude that $X(\bar t; t_s)=\frac{\der\rho_+}{\der t_s}(\bar t;t_s)<0$.
This finishes the proof.
	\end{proof}
\end{lem}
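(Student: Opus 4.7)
The plan is to introduce $X(t;t_s) := \pt_{t_s}\rho_+(t;t_s)$ and $Y(t;t_s) := \pt_{t_s}E_+(t;t_s)$, derive from \eqref{system-downstream} a coupled first-order linear ODE system in $t$ for $(X,Y)$, and then propagate the negativity of $X(t_s;t_s)$ to the whole interval $[t_s,\bar t\,]$ by a first-zero/integrating-factor argument that uses Lemma \ref{lemS} and hypotheses (i)--(ii) in an essential way.

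First, I would integrate $\rho_+' = g_1(t,\rho_+,E_+,\kappa_s)$ from $t_s$ to $t$ and differentiate in $t_s$. The chain rule together with the explicit formulas in \eqref{def-gs} gives
\begin{equation*}
X'(t;t_s) = (\pt_\rho g_1)\,X + (\pt_E g_1)\,Y + (\pt_{\kappa_s} g_1)\,\frac{d\kappa_s}{dt_s},
\qquad X(t_s;t_s) = \frac{d\rho_s}{dt_s} - g_1\bigl(t_s,\rho_s,E_s,\kappa_s\bigr).
\end{equation*}
From \eqref{def-gs}, in the subsonic region $M_+^2<1$ one has $\pt_E g_1 > 0$, and hypothesis (ii) (which gives $\rho_+' > 0$) combined with $M_+^2<1$ yields $\pt_{\kappa_s} g_1 < 0$. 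I expect $X(t_s;t_s)$ to simplify, via the R-H identities \eqref{rhc} and the jump relation $M_s^2-1 = -\tfrac{p_-}{p_s}(M_-^2-1)$, into an explicit negative multiple of $(M_-^2)'(t_s)$; combined with hypothesis (i), this gives $X(t_s;t_s) < 0$.

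Similarly, integrating the equation $(\hat t E)' = \hat t(\rho-b)$ in $t$ and differentiating in $t_s$ yields
\begin{equation*}
(\hat t\,Y)'(t;t_s) = \hat t\,X(t;t_s), \qquad Y(t_s;t_s) = (\rho_- - \rho_s)(t_s) < 0,
\end{equation*}
where the boundary value is negative because $u_s < u_-$ by admissibility \eqref{inequality-rhadm} and $\hat t\,\rho u = m_0$ is preserved across the shock, forcing $\rho_s > \rho_-$ at $t_s$. Consequently, as long as $X \le 0$ on a subinterval of $[t_s,\bar t\,]$, integrating the identity above keeps $Y < 0$ on that same subinterval.

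The hard part will be closing the sign-propagation argument. Suppose for contradiction that $X(\cdot;t_s)$ has a first zero at $t^* \in (t_s,\bar t\,]$, so that $X < 0$ on $[t_s,t^*)$ and, by the previous step, $Y < 0$ on $[t_s,t^*]$. By Lemma \ref{lemS} also $\frac{d\kappa_s}{dt_s} > 0$, so the inhomogeneous term $(\pt_E g_1)\,Y + (\pt_{\kappa_s} g_1)\frac{d\kappa_s}{dt_s}$ in the ODE for $X$ is uniformly bounded above by some $-\lambda_1 < 0$ on $[t_s,t^*]$. Multiplying by the integrating factor $\mu(t):= \exp\!\bigl(-\int_{t_s}^t \pt_\rho g_1\,d\eta\bigr) > 0$ and integrating from $t_s$ to $t^*$ gives
\begin{equation*}
\mu(t^*)\,X(t^*;t_s) \;\le\; X(t_s;t_s) - \lambda_1\int_{t_s}^{t^*}\mu(\eta)\,d\eta \;<\; 0,
\end{equation*}
contradicting $X(t^*;t_s) = 0$. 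Hence $X < 0$ throughout $(t_s,\bar t\,]$, and in particular $\pt_{t_s}\rho_+(\bar t;t_s) < 0$, as claimed.
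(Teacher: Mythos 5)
Your proposal is correct and follows essentially the same route as the paper: the same linearized ODE for $X=\pt_{t_s}\rho_+$ coupled with $(\hat t Y)'=\hat t X$, the same sign analysis of $\pt_E g_1$, $\pt_{\kappa_s}g_1$, $\frac{d\kappa_s}{dt_s}$ (via Lemma \ref{lemS}) and of $X(t_s;t_s)$, and the same first-zero/integrating-factor contradiction. Your integrating factor $\exp\bigl(-\int_{t_s}^t \pt_\rho g_1\,d\eta\bigr)$ is in fact the correct one for the coefficient of $X$ (the paper's text writes $a_1$ where it evidently means $a_3=\pt_\rho g_1$), so no gap remains.
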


We are now ready to prove the monotonicity of the exit pressure $p_+(T;t_s)$ with respect to $t_s$.

	\begin{proof}[Proof of Proposition \ref{promp}]
	For $t_s\in[0, T]$,
	define
	\begin{equation*}
	B_{+}\left(T;t_{s}\right)
	=(\frac{u_{+}^{2}}{2}+\frac{\gam p_+}{(\gam-1)\rho_+})(T;t_s).
	\end{equation*}
	Substituting the expressions $u_+(t;t_s)=\frac{m_0}{\hat t\rho_+(t;t_s)}$ and
	$\rho_{+}(t;t_s)=\left(  \frac{p_{+}(t;t_s)}{\kappa_s(t_s)}\right) ^{\frac{1}{\gam}}$ into the definition stated above, we get		\begin{equation}
	B_{+}\left(T;t_{s}\right)
	=\frac{1}{2}\left(  \frac{m_{0}}{r_1}\right)^{2}\left(  \frac{\kappa_s(t_s)}{p_{+}(T;t_s)}\right) ^{\frac{2}{\gam}}+\frac{\gam}{\gam-1}p_{+}^{1-\frac{1}{\gam}}(T;t_s)\kappa_s^{\frac{1}{\gam}}(t_s)=:G\left(p_{+}(T;t_s),\kappa_s(t_s)\right).
	\label{Beqn}%
	\end{equation}
	%where $p_+=p_{+}(r_0-r_1;t_{s})$ and $S_+=S_{+}(t_{s})$.
	We differentiate \eqref{Beqn} with respect to $t_{s}$ to get
	\begin{equation}
	\label{B-expression}
	\begin{split}
	\frac{d}{d t_{s}}B_{+}\left( T;t_{s}\right)
	&=G_{p_{+}}(p_+(T;t_s), \kappa_s(t_s))
	\frac{dp_{+}}{d t_{s}}(T;t_s)+G_{\kappa_s}(p_+(T;t_s), \kappa_s(t_s))\frac{d\kappa_{s}}{d
		t_{s}}(t_s)
	\end{split}
	\end{equation}
	with
	\begin{equation*}
	(G_{p_+}, G_{\kappa_s})(p_+(T;t_s), \kappa_s(t_s))
	=(\frac{1-M_+^2(T;t_s)}{\rho_+(T;t_s)},  \,\,
	 \frac{m_0^2\kappa_s^{\frac{2}{\gam}-1}(t_s)}{r_1^2p_{+}^{\frac{2}{\gam}}(T;t_s)}+\frac{1}{\gam-1}
	\left(\frac{p_{+}(T;t_s)}{\kappa_s(t_s)}\right)^{1-\frac{1}{\gam}} ).
	\end{equation*}
From  \eqref{rhr} and \eqref{bp}, it follows that %$B_{+}^{\p}=E_{+}$ and $[B]_{\Gam_s}=0$,%
	\begin{equation}
	\label{B+}
	B_{+}\left(  T;t_{s}\right)  =B_-\left(  t_{s}\right) +
	\int_{t_{s}}^{T}E_{+}\left(  \eta;t_s\right)\,
	d\eta.
	\end{equation}
By differentiating (\ref{B+}) with respect to $t_{s}$, and by applying \eqref{rhc}, one gets
	\begin{equation*}
	\frac{d}{d t_{s}}B_{+}\left(  T;t_{s}\right)  =\int_{t_{s}}
	^{T}\frac{\pt E_{+}  }{\pt t_{s}}\left(  \eta;t_{s}\right)\,d\eta.
	\end{equation*}
	Due to Lemma \ref{lemrho} and \eqref{Ers}, it holds that $\frac{\pt E_{+} }{\pt t_{s}}\left(
	t;t_{s}\right) <0$ for all $t\in\left[  t_s,T\right]$, and this implies that
	\begin{equation*}
	\frac{d}{d t_{s}}B_{+}\left(  T;t_{s}\right)<0.
	\end{equation*}
Finally, combining this with \eqref{condition-subsonic} and \eqref{B-expression} implies \eqref{p-monot}.
\end{proof}

%We are now ready to prove the monotonicity of the exit pressure $p_+(r_0-r_1;t_s)$ with respect to $t_s$. Combining Lemma $\ref{lemS}$ and Lemma $\ref{lemrho}$, we prove Proposition $\ref{promp}$.

\section{Supersonic flow}\label{secIVP}
Before proving Theoren \ref{thmshock}, we first prove the existence of radial supersonic solutions of \eqref{ep} satisfying \eqref{condition-M-mt}.

%\subsection{Supersonic flow}

\begin{prop}
\label{proposition-supersonic}

Fix $\gam>1$ and $r_0>0$ for the definition of the annulus $\nn$ in \eqref{def-annulus}. Given positive constant data $(m_0, \kappa_0, M_0)$ with $M_0>1$, if $r_1\in(0, r_0)$ in \eqref{def-annulus} satisfies
\begin{equation}
\label{condition-ann-ration}
\ln \frac{r_0}{r_1}<\frac{\gam+1}{2(\gam-1)},
\end{equation}
then there exists a constant $\underline{E}>0$ depending on $(\gam, m_0, \kappa_0, M_0, r_0, r_1, b_0)$ so that whenever $E_0 > \underline{E}$, the initial value problem \eqref{system-simplified1} with $(M, E)(0)=(M_0, E_0)$ has a unique $C^1$ solution $(M, E)(t)$ for $t\in[0, r_0-r_1]$ satisfying \eqref{condition-M-mt}.
\begin{proof}
(Step 1)
We rewrite $\eqref{odeM}$ as
\begin{equation}
\label{equation-M-another}
(M^2)'=(\gam+1)\mu_0 \hat t^{\frac{\gam-3}{\gam+1}}\frac{(M^2)^{\frac{2\gam}{\gam+1}}}{M^2-1}\left(\hat t E-K\right),
\end{equation}
for $K=\frac{2(\gam-1)}{\gam+1}B$, where $B$ is defined by \eqref{defi-Bernoulli}. Since $(h_1, h_2)$ from \eqref{odeM} and \eqref{new-equation-E} are $C^1$ with respect to $(t, M^2, E)$ for $M\neq 1$, the initial value problem \eqref{system-simplified1} with $(M, E)(0)=(M_0, E_0)$ with $M_0>1$ and $E_0>0$ has a unique $C^1$ solution for $t\in[0, \varepsilon_0)$ for some small constant $\varepsilon_0>0$ with $M^2(t)>1$ for all $t\in[0, \varepsilon_0)$.

(Step 2)
By the definition of $K$ and \eqref{bp}, $K$ satisfies
\begin{equation}\label{kprime}
K'=\frac{2(\gam-1)}{\gam+1}E.
\end{equation}
Combine this equation with \eqref{new-equation-E} to get
\begin{align}
\label{rekdiff}
(\hat t E- K)^\p=\hat t\left(\mu_1\left(\frac{1}{\hat t^2M^2}\right)^{\frac{1}{\gam+1}}-b(t)\right)-\frac{2(\gam-1)}{\gam+1}E.
\end{align}
From \eqref{new-equation-E} and $M^2>1$, it follows that
\begin{align}
\label{E-Sup-ineq-1}
		-\hat t b_0<(\hat t E)^\p<\hat t \mu_1\left(\frac{1}{\hat t}\right)^{\frac{1}{2(\gam+1)}}
		\qd\tx{for all $t\in [0, \varepsilon_0)$}.
		\end{align}
By integrating this inequality over the interval $[0, t]$ for $t\in(0, \varepsilon_0)$, we obtain that
		\begin{align}\label{E-Sup-ineq-2}
		\frac{r_0E_0-\alp_1}{r_0-t}<E(t)<\frac{1}{r_0-t}\left(r_0E_0+\int_0^t (r_0-\eta)
		\mu_1\left(\frac{1}{r_0-\eta }\right)^{\frac{2}{(\gam+1)}}\,d\eta\right)\le
		\frac{r_0E_0+\alpha_1}{r_0-t},
		\end{align}
		for a constant $\alpha_1>0$ depending only on $(r_0,  \kappa_0, m_0, b_0, \gam)$. Note that $\alpha_1$ is independent of $\varepsilon_0$ and $E_0$. Combine \eqref{E-Sup-ineq-2} with \eqref{rekdiff} to get
		\begin{align*}
		(\hat t E- K)(t)&> r_0E_0-K_0
		 -\int_0^t r_0b_0+\frac{2(\gam-1)(r_0E_0+\alpha_1)}{(\gam+1)(r_0-\eta)}\,d\eta\\
		 &= r_0E_0(1-\frac{2(\gam-1)}{\gam+1}\ln \frac{r_0}{r_0-t})-K_0-\int_0^t r_0b_0+\frac{2(\gam-1)\alpha_1}{(\gam+1)(r_0-\eta)}\,d\eta \\
		 &\ge r_0E_0(1-\frac{2(\gam-1)}{\gam+1}\ln \frac{r_0}{r_0-\varepsilon_0})-\alpha_2\qd\tx{ for $t\in [0,\varepsilon_0]$},
		\end{align*}
for $K_0:=\frac{\gam-1}{\gam+1}u_0^2+\frac{2\gam p_0}{(\gam+1)\rho_0}$ and a constant $\alpha_2>0$ depending on $(r_0, r_1, M_0, \kappa_0, m_0, \gam,b_0)$ but independent of $(\varepsilon_0, E_0)$.

(Step 3) Fix a constant $r_1>0$ to satisfy \eqref{condition-ann-ration}, and set
\begin{equation*}
\delta_0:=1-\frac{2(\gam-1)}{\gam+1}\ln \frac{r_0}{r_1}
\end{equation*}
so that $0<\delta_0<1$ holds. Then we have
$r_0E_0(1-\frac{2(\gam-1)}{\gam+1}\ln \frac{r_0}{r_0-\varepsilon_0})-\alpha_2\ge r_0E_0\delta_0-\alp_2$, whenever $\varepsilon_0\in(0, r_0-r_1).$ Set $\underline{E}$ as
\begin{equation*}
\underline{E}:= \frac{\alp_2}{r_0\delta_0}.
\end{equation*}
Note that $\underline{E}$ depends only on $(\gam, m_0, \kappa_0, M_0, r_0, r_1, b_0)$.
If $E_0>\underline{E}$, then we obtain that
\begin{equation}
\label{M-mtprop-si}
(M^2)'(t)>0\quad\tx{for all $t\in[0, \varepsilon_0)$.}
\end{equation}

(Step 4)
Given positive constant data $(m_0, \kappa_0, M_0, E_0)$ with $M_0>1$ and $E_0>\underline{E}$, we have shown in (step 1)--(step 3) that the initial value problem \eqref{system-simplified1} with $(M, E)(0)=(M_{0}, E_0)$ has a unique smooth solution on the interval $[0, \varepsilon_0)$ for some small constant $\varepsilon_0>0$, furthermore the solution satisfies $(M^2)'>0$ on $(0, \varepsilon_0)$. It remains to extend the solution up to $t=r_0-r_1(=:T)$.

By \eqref{E-Sup-ineq-1} and \eqref{E-Sup-ineq-2}, $E$ can be extended up to $t=\varepsilon_0$ as
\begin{equation*}
(r_0-\varepsilon_0)E(\varepsilon_0)=r_0E_0+\lim_{t\to\varepsilon_0-}\int_0^{t} h_2(\eta, M(\eta), E(\eta), \kappa_0)\,d\eta,
\end{equation*}
there exists a constant $\alpha_3>0$ depending on $(\gam, m_0, \kappa_0, M_0, E_0, b_0, r_0, r_1)$, but independent of $\varepsilon_0$ such that
\begin{equation*}
\sup_{t\in[0, \varepsilon_0]}|E(t)|\le \alpha_3.
\end{equation*}
Then \eqref{odeM} yields that
\begin{equation*}
0<\frac{(M^2)'}{(M^2)^{\frac{\gam-1}{\gam+1}}}\le  \frac{M_0^2}{M_0^2-1}(\gam+1)\alpha_3
\frac{(r_0^2)^{\frac{\gam-1}{\gam+1}}}{r_1}\quad\tx{on $(0, \varepsilon_0)$.}
\end{equation*}
By integrating the inequality above over the interval $[0, t)$ for $t\in(0, \varepsilon_0)$, we obtain that
\begin{equation*}
(M_0^2)^{\frac{2}{\gam+1}}<(M^2(t))^{^{\frac{2}{\gam+1}}}\le \frac{2t}{\gam+1}\frac{M_0^2}{M_0^2-1}(\gam+1)
\alpha_3\frac{(r_0^2)^{\frac{\gam-1}{\gam+1}}}{r_1}\quad\tx{for $t\in (0,\varepsilon_0)$}.
\end{equation*}
By \eqref{M-mtprop-si} and the monotone convergence theorem, the limit $\underset{t\to \varepsilon_0-}{\lim}M^2(t)$ exists, therefore $M^2$ can be extended up to $t=\varepsilon_0$. Then, one can repeat the argument in (step 1)--(step 3) to conclude that the $C^1$ solution to \eqref{system-simplified1} and $(M, E)(0)=(M_{0}, E_0)$ with $M_0>1$ and $E_0>\underline{E}$ uniquely exists up to $t=r_0-r_1$, and the solution satisfies \eqref{condition-M-mt} for all $t\in(0, r_0-r_1]$.

\end{proof}

\end{prop}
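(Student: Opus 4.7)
The plan is to reduce the proposition to controlling the sign of the driving term $\hat t E - K$ that governs $(M^2)'$, and then to close a large-$E_0$ bootstrap under the geometric smallness hypothesis $\ln(r_0/r_1) < (\gamma+1)/(2(\gamma-1))$. First I would recast \eqref{odeM} in the equivalent form
\[
(M^2)' = (\gamma+1)\mu_0\, \hat{t}^{(\gamma-3)/(\gamma+1)}\,\frac{(M^2)^{2\gamma/(\gamma+1)}}{M^2 - 1}\,(\hat{t}E - K),\qquad K := \tfrac{2(\gamma-1)}{\gamma+1}B,
\]
so that in the regime $M^2 > 1$ the sign of $(M^2)'$ agrees with the sign of $\hat{t}E - K$. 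Standard ODE theory then gives a unique $C^1$ solution on some maximal interval $[0, t_\star) \subset [0, r_0-r_1]$ with $M > 1$, since the right-hand side of \eqref{system-simplified1} is $C^1$ away from $M = 1$ and $(M_0, E_0)$ satisfies $M_0 > 1$, $E_0 > 0$.

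Next I would establish a priori bounds on $E$ on $[0, t_\star)$. By \eqref{rM-expression} together with $M^2 > 1$ one has $0 < \rho \leq \mu_1 \hat{t}^{-2/(\gamma+1)}$, which fed into \eqref{new-equation-E} yields the two-sided ODE bound $-\hat{t}\,b_0 \leq (\hat{t}E)' \leq \mu_1 \hat{t}^{(\gamma-1)/(\gamma+1)}$. Integrating from $0$ then produces
\[
\frac{r_0 E_0 - \alpha_1}{r_0 - t} \leq E(t) \leq \frac{r_0 E_0 + \alpha_1}{r_0 - t}
\]
for a constant $\alpha_1 = \alpha_1(\gamma, m_0, \kappa_0, b_0, r_0)$ which is crucially \emph{independent} of $E_0$. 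Thus $|E|$ grows at worst like $1/(r_0 - t)$ with an $E_0$-independent remainder.

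The main obstacle, and the entire reason for the smallness hypothesis on $\ln(r_0/r_1)$, is to keep $\hat{t}E - K$ positive throughout $[0, r_0 - r_1]$ when $E_0$ is large. From \eqref{bp} and the definition of $K$ I get $K' = \tfrac{2(\gamma-1)}{\gamma+1}E$, whence
\[
(\hat{t}E - K)' = \hat{t}(\rho - b(t)) - \tfrac{2(\gamma-1)}{\gamma+1}\, E.
\]
Inserting the upper bound on $E$ from the previous step and integrating gives
\[
(\hat{t}E - K)(t) \geq r_0 E_0\Bigl(1 - \tfrac{2(\gamma-1)}{\gamma+1}\ln\tfrac{r_0}{r_0 - t}\Bigr) - \alpha_2,
\]
with $\alpha_2$ depending on $(\gamma, r_0, r_1, M_0, \kappa_0, m_0, b_0)$ but again independent of $E_0$. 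The hypothesis $\ln(r_0/r_1) < (\gamma+1)/(2(\gamma-1))$ makes $\delta_0 := 1 - \tfrac{2(\gamma-1)}{\gamma+1}\ln(r_0/r_1)$ strictly positive, so on $[0, r_0 - r_1]$ the coefficient of $r_0 E_0$ remains $\geq \delta_0$; choosing $\underline{E} := \alpha_2/(r_0\delta_0)$ then forces $\hat{t}E - K > 0$, and therefore $(M^2)' > 0$, for every $t$ at which the solution exists.

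To close the argument I would run a standard continuation step. The uniform $L^\infty$ bound on $E$ combined with \eqref{odeM} yields an a priori polynomial-in-time bound on $(M^2)^{2/(\gamma+1)}$, while the strict monotonicity $(M^2)' > 0$ prevents $M^2$ from returning to the sonic value $1$ from above. Hence the solution can be extended past any $t_\star < r_0-r_1$, and iterating this extension produces the desired $C^1$ supersonic solution on all of $[0, r_0 - r_1]$ satisfying \eqref{condition-M-mt}.
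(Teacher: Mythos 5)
Your proposal is correct and follows essentially the same route as the paper's own proof: the same reduction to the sign of $\hat t E-K$, the same $E_0$-independent two-sided bound $\tfrac{r_0E_0-\alpha_1}{r_0-t}\le E\le\tfrac{r_0E_0+\alpha_1}{r_0-t}$, the same lower bound $(\hat tE-K)(t)\ge r_0E_0\bigl(1-\tfrac{2(\gam-1)}{\gam+1}\ln\tfrac{r_0}{r_0-t}\bigr)-\alpha_2$ leading to the choice $\underline{E}=\alpha_2/(r_0\delta_0)$, and the same continuation step via the a priori bound on $(M^2)^{2/(\gam+1)}$ and the monotonicity of $M^2$. No substantive differences to report.
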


\begin{remark}
In the definition \eqref{def-annulus} of $\mcl{A}$, denote $r_1$ by $r_1=\lambda r_0$ for $\lambda\in(0,1)$. According to Proposition \ref{proposition-supersonic}, if the convergent ratio of the annulus $\lambda(=\frac{r_1}{r_0})$ is greater than $e^{-\frac{\gam+1}{2(\gam-1)}}$, then a radial supersonic solution to \eqref{ep} in $\nn$ is \emph{relatively accelerating} throughout the annulus $\nn$, in the sense that the Mach number $M$ monotonically increases in the flow direction, provided that the magnitude of electric field ${\bf E}=-E_0\hat{{\bf r}}$ at the entrance $\Gam_0$ is sufficiently large depending on $(\gamma, m_0, \kappa_0, M_0, r_0, b_0,\lambda)$. Since $\underset{\gam\to 1+}{\lim} e^{-\frac{\gam+1}{2(\gam-1)}}=0$, this conditional monotonicity property holds for a bigger class of annuli as $\gam$ is close to $1$.
\end{remark}

\section{Proof of Theorem \ref{thmshock}}
This section is devoted to prove our main theorem.

\begin{proof}[Proof of Theorem $\ref{thmshock}$ (a)] (Step 1) Fix $\gam>1$, $r_0>0$, then fix  $\tilde r_1\in (0,r_0)$ to satisfy $\ln\frac{r_0}{\tilde r_1}<\frac{\gam+1}{2(\gam-1)}$. Given positive constant $(m_0,\kappa_0,M_0)$ with $M_0>1$, let $\ul E$ be from Proposition $\ref{proposition-supersonic}$ for $r_1$ replaced by $\tilde r_1$. For $E_0>\ul E$, let $(\rho_-, u_-, p_-, E_-)(t)$ be the solution to the initial value problem \eqref{system-simplified1} with $(M_-, E_-)(0)=(M_0, E_0)$. By Proposition \ref{proposition-supersonic}, \eqref{system-simplified1} with $(M_-, E_-)(0)=(M_0, E_0)$ has a unique solution in $[0,r_0-\tilde r_1]$, and the solution $(\rho_-, u_-, p_-, E_-)(t)$ satisfies
	\begin{equation*}
	M_-(t)>1,\quad M_-'(t)>0\quad\tx{for all $t\in[0, r_0-\tilde r_1]$.}
	\end{equation*}
Fix $t_s\in [0, T_1]$ for $T_1=r_0-\tilde r_1$, and let $(\rho_s, u_s, p_s, E_s)$ be given by \eqref{rhc}. Set $\kappa_s$ as in \eqref{definition-kappa},
and consider the following initial value problem
\begin{equation}
\label{ivp-ds-sub}
\begin{split}
&\begin{cases}
(\hat t\rho u, e^{S})=(m_0, \kappa_s)\\
\rho'=g_1(t, \rho, E, \kappa_s)\\
(\hat t E)'=g_2(t, \rho, E)
\end{cases},\\
&(\rho, E)(t_s)=(\rho_s, E_s)
\end{split}
\end{equation}
for $(g_1, g_2)$ defined by \eqref{def-gs}. Set $M_s:=u_s/\sqrt{\frac{\gam p_s}{\rho_s}}=u_s/\sqrt{\gam \kappa_s\rho_s^{\gam-1}}$, then \eqref{inequality-rhadm} implies that $g_1$ and $g_2$ are $C^1$ with respect to $(t,\rho, E)$ near $(t_s,\rho_s, E_s)$. Therefore, \eqref{ivp-ds-sub} has a unique $C^1$ solution $(\rho_+,u_+,p_+,E_+)(t;t_s)$  with $\rho_+>0$ for $t\in(t_s-\eps_0, t_s+\eps_0)$ for some small constant $\eps_0>0$. Furthermore, the solution is subsonic in the sense that $\gam\kappa_s\rho_+^{\gam-1}-\frac{m_0^2}{\hat t^2\rho_+^2}>0$, or equivalently $M_+^2-1<0$ on $(t_s-\eps_0, t_s+\eps_0)$ for $M_+^2=\frac{u_+^2}{\gam\kappa_s\rho_+^{\gam-1}}$.

Moreover, the solution $(\rho_+, u_+, p_+, E_+)(t;t_s)$ satisfies
\begin{equation}
\label{r-mt-at-shock}
\rho_+'(t_s;t_s)>0.
\end{equation}
This can be checked as follows: By \eqref{rhc}, \eqref{system-downstream}, \eqref{inequality-rhadm}, \eqref{equation-M-another} and Proposition \ref{proposition-supersonic}, we have
\begin{equation*}
\frac{dM_+^2}{dt}(t_s;t_s)=(\gam+1)\mu_0 (r_0-t_s)^{\frac{\gam-3}{\gam+1}}\frac{(M_s^2)^{\frac{2\gam}{\gam+1}}}{M_s^2-1}\left((r_0-t_s) E_-(t_s)-K_-(t_s)\right)<0.
\end{equation*}
Then, \eqref{r-mt-at-shock}  follows from \eqref{rM-expression} combined with the inequality right above. By \eqref{r-mt-at-shock}, there exists a small constant $\eps_1\in(0, \eps_0)$ depending on $(\gam,m_0,\kappa_0,M_0,E_0,r_0,t_s,b_0)$ such that $(M_+^2)'(t;t_s)<0$ thus $\rho_+'(t;t_s)>0$ hold by \eqref{rM-expression} on $[t_s-\eps_1, t_s+\eps_1]$.
\smallskip

(Step 2) For each $t_s\in [0,r_0-\tilde r_1]$,
let $l(t_s)>0$ be given to satisfy the following properties:
\begin{itemize}
\item[(i)] The initial value problem \eqref{ivp-ds-sub} is uniquely solvable for $t_s\le t<l(t_s)$;

\item[(ii)]
\begin{equation}
\label{prop-sub-rad}
\rho_+(t;t_s)>0,\quad \rho_+'(t;t_s)>0,\quad 0<M_+(t;t_s)<1
\end{equation}
for $t_s\le t<l(t_s)$;

\item[(iii)] (ii) does not hold for $t\ge l(t_s)$.
\end{itemize}
Note that we may have $l(t_s)=\infty$. Choose $T$ as
\begin{equation*}
T=\min\{\inf_{t_s\in[0, r_0-\tilde{r}_1]} l(t_s), r_0-\tilde{r}_1\}.
\end{equation*}
Such a constant $T$ is strictly positive.
Then, we obtain from Proposition \ref{promp} that $\frac{dp_+}{dt_s}(T;t_s)<0$ for all $t_s\in (0,r_0-r_1)$. This proves Theorem \ref{thmshock} (a).
\end{proof}

\begin{remark}
If $1<\gam<2$, a lengthy computation shows the initial value problem $\eqref{ivp-ds-sub}$ is uniquely solvable only on a finite interval especially when $E_s>0$ is sufficiently large, which can happen if $E_0>0$ is sufficiently large. Furthermore, life-span of $C^1$ solution $(\rho_+, u_+, p_+, E_+)(t;t_s)$ to \eqref{ivp-ds-sub} for $t\ge t_s$ converges to $0$ as $E_s\to \infty$. For $\gam\ge 2$, however, one can construct a family of radial transonic shock solutions satisfying \eqref{p-monot} in an annular domain $\nn$ given by \eqref{def-annulus} whenever \eqref{condition-ann-ration} holds.
\end{remark}

\begin{proof}
[Proof of Theorem \ref{thmshock} (b)]
(Step 1) Fix $\gam\ge 2$, and fix two constants $r_0>r_1>0$ with $\ln \frac{r_0}{r_1}<\frac{2(\gam-1)}{\gam+1}$ to define an annulus $\nn$ by \eqref{def-annulus}. Given positive constant $(m_0, \kappa_0, M_0)$ with $M_0>1$, let $\underline{E}$ be from Proposition \ref{proposition-supersonic}. For $E_0>\underline{E}$ to be further specified later, let $(\rho_-, u_-, p_-, E_-)(t)$ be the solution to the initial value problem \eqref{system-simplified1} with $(M_-, E_-)(0)=(M_0, E_0)$. By Proposition \ref{proposition-supersonic}, the solution $(\rho_-, u_-, p_-, E_-)(t)$ satisfies
\begin{equation*}
M_-(t)>1,\quad M_-'(t)>0\quad\tx{for all $t\in[0, T]$.}
\end{equation*}
Differently from the proof of Theorem \ref{thmshock}(a), a lower bound of $E_0$ needs to be adjusted further to acquire a family of radial transonic shock solutions satisfying \eqref{p-monot} for $t_s\in(0, T_*)$ for $T_*=r_0-r_1$.
\smallskip

(Step 2)
Fix $t_s\in(0, T_*)$, and let $(\rho_+,u_+,p_+,E_+)(t;t_s)$ be a $C^1$ solution to \eqref{ivp-ds-sub}.
As discussed in the proof of Theorem \ref{thmshock}(a), $(\rho_+,u_+,p_+,E_+)(t;t_s)$ is well defined on $(t_s-\eps_0, t_s+\eps_0)$ with satisfying \eqref{prop-sub-rad} for some small $\eps_0>0$.
\smallskip
In this step, we find a sufficient condition for $(\rho_+,u_+,p_+,E_+)(t;t_s)$ to satisfy
\begin{equation}\label{sub-condition}
\rho_+(t;t_s)>0,\quad \rho_+'(t;t_s)>0
,\quad\tx{and}\quad 0<M_+(t;t_s)<1 \quad\tx{for $t_s\le t\le T_*$}
\end{equation}
for any $t_s\in(0, T_*)$.
%{\emph{Claim 1. For each $t_s\in(0, T_*)$, there exists a constant $E_s^*>0$ so that whenever $E_s>E_s^*$, the initial value problem \eqref{ivp-ds-sub} has a unique smooth solution on $[t_s, r_0-r_1]$ satisfying}}
%\begin{equation}\label{sub-condition}
%\rho_+(t;t_s)>0,\quad \rho_+'(t;t_s)>0,\quad\tx{and}\quad 0<M_+(t;t_s)<1 \quad\tx{for $t_s\le t\le r_0-r_1$.}
%\end{equation}

Rewrite $\rho^\p=g_1(t,\rho,E,\kappa_s)$ in $\eqref{def-gs}$ as
	\begin{align}\label{oderhorho}
	\rho^\p=\frac{\rho^{2-\gam}(\hat t E-\frac{m_0^2}{\hat t^2\rho^2})}{\hat t \gam \kappa_s (1-M^2)}.
	\end{align}
	One can see from $\eqref{oderhorho}$ that $\eqref{sub-condition}$ is equivalent to $\rho_+(t;t_s)>0$, $\hat tE_+(t;t_s)-\frac{m_0^2}{\hat t^2\rho_+^2(t;t_s)}>0$ and $ 0<M_+(t;t_s)<1$ for $t\in [t_s, r_0-r_1]$.

Integrate the differential inequality $(\hat t E)'\ge -\hat t b_0$ over the interval $[0, t]$ to get
\begin{equation}
\label{inequality-lb-Eds}
E(t)\ge \frac{1}{\hat t}\left((r_0-t_s)E_s-\beta_1\right)
\end{equation}
for some constant $\beta_1>0$ depending only on $(r_0, r_1, b_0)$.

Setting as $\hat{t}_s=r_0-t_s$, the estimate $\eqref{inequality-lb-Eds}$ yields
	\begin{align}\label{teu2-estimate-crutial}
	(\hat t E_+-\frac{m_0^2}{\hat t^2\rho_+^2})(t;t_s)&> \hat t_sE_s-\beta_1-\frac{m_0^2}{\hat t^2\rho_s^2}\nonumber\\
	&\ge \hat t_s E_s-\frac{\hat t_s^2}{r_1^2}u_s^2-\beta_1=:F_s(t_s)\qd\tx{for $t\in [t_s,t_s+\eps_0)$}.
	\end{align}

If
\begin{equation}\label{nece-condi1}
  F_s(t_s)\ge \beta_1,
\end{equation}
then we have
\begin{equation}
\label{Msigma}
  \limsup_{t\to (t_s+\eps_0)-}M_+^2(t;t_s)\le 1-\delta_s
\end{equation}
for some constant $\delta_s\in(0,1)$. This can be checked as follows: We rewrite $\eqref{odeM}$ as
	\begin{equation}\label{odeMreu}
\begin{split}
	(M^2_+)'&=\mu_s\hat t^{\frac{\gam-3}{\gam+1}}\frac{(M_+^2)^{\frac{2\gam}{\gam+1}}}{M_+^2-1}
\left({\hat t E_+-\frac{m_0^2}{\hat t^2\rho_+^2}}+\frac{2(M_+^2-1)}{\mu_s (\hat t^2 M_+^2)^{\frac{\gam-1}{\gam+1}}}\right)\\
&=:\mu_s\hat t^{\frac{\gam-3}{\gam+1}}\frac{(M_+^2)^{\frac{2\gam}{\gam+1}}}{M_+^2-1}H_1(M_+, \rho_+, E_+, \kappa_s)
\end{split}	
	\end{equation}
	for $\mu_s:=(\gam+1)\frac{1}{\gam\kappa_s}\left(\frac{\gam \kappa_s}{m_0^2}\right)^{\frac{\gam-1}{\gam+1}}$. If $F_s(t_s)\ge \beta_1$, then \eqref{teu2-estimate-crutial} implies that
\begin{equation*}
  \lim_{M_+\rightarrow 1-} H_1(M_+, \rho_+, E_+, \kappa_s)\ge \beta_1>0,
\end{equation*}
thus $M_+^2$ decreases when $M_+^2$ is less than $1$, and sufficiently close to 1. This proves \eqref{Msigma}.
\smallskip

Next, we show that
\begin{equation}
\label{no-fin-bl}
 \rho_s< \lim_{t\to (t_s+\eps_0)-}\rho_+(t;t_s)<+\infty.
\end{equation}
Since $\rho_+(t;t_s)$ monotonically increases over the interval $(t_s, t_s+\eps_0)$, by using \eqref{def-gs}, \eqref{system-downstream}, \eqref{oderhorho} and \eqref{Msigma}, we get
\begin{equation}
\label{diff-inequ-sub}
 0< \rho_+'(t;t_s)\le \frac{C_s(1+E_s)}{\kappa_s\delta_s}\rho_+^{3-\gam}(t;t_s)\quad\tx{for $t\in(t_s, t_s+\eps_0)$}
\end{equation} 	
for some constant $C_s>0$ depending on $(\gam, r_0,r_1, t_s, \rho_s)$. Here, we used the fact that $\hat t E_+(t;t_s)\le \hat t_s E_s+\rho_+(t;t_s)\int_{t_s}^t (r_0-\eta)\,d\eta$ provided that $\rho_+$ monotonically increases up to $t$. This is obtained from the equation $(\hat t E_+)'=\hat t(\rho_+-b(t))<\hat t \rho_+$. Since $\gam\ge 2$, \eqref{no-fin-bl} is obtained from \eqref{diff-inequ-sub} and the monotone sequence theorem. In \eqref{teu2-estimate-crutial} and \eqref{Msigma}, the estimates are independent of $\eps_0$. And, \eqref{diff-inequ-sub} shows that $\rho_+(t;t_s)$ does not blow up for $t\le T_*(<r_0)$ when $\rho_+$ monotonically increases. Therefore, by the method of continuation, the following lemma is obtained:
\begin{lem}
\label{lemma-nece-cond}
  For a fixed $t_s\in(0, T_*)$, if the condition \eqref{nece-condi1} holds, then the corresponding subsonic solution $(\rho_+, u_+, p_+, E_+)(t;t_s)$ is well defined on the interval $[t_s, T_*]$, and it satisfies the properties \eqref{sub-condition}.
\end{lem}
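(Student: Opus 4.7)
The plan is to prove the lemma by a standard continuation argument whose entire analytic content is already bundled into the estimates \eqref{teu2-estimate-crutial}, \eqref{Msigma}, and \eqref{diff-inequ-sub}. I would fix $t_s \in (0, T_*)$ with $F_s(t_s) \ge \beta_1$, set
\begin{equation*}
t^* := \sup\{\tau \in [t_s, T_*] : \tx{the solution exists on } [t_s, \tau] \tx{ and satisfies } \eqref{sub-condition}\},
\end{equation*}
and argue by contradiction that $t^* = T_*$. The local solvability discussed just above already yields $t^* > t_s$, so the whole task is to rule out $t^* < T_*$.

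The self-consistency/bootstrap step runs as follows. On any subinterval $[t_s, \tau] \subset [t_s, t^*)$, the standing monotonicity $\rho_+' > 0$ yields $\rho_+(t;t_s) \ge \rho_s$, which extends the derivation of \eqref{teu2-estimate-crutial} verbatim and produces the uniform strict lower bound $\bigl(\hat t E_+ - \tfrac{m_0^2}{\hat t^2 \rho_+^2}\bigr)(t;t_s) \ge F_s(t_s) \ge \beta_1 > 0$. Inserted into \eqref{odeMreu}, this yields $M_+^2(t;t_s) \le 1 - \delta_s$ on all of $[t_s,\tau]$ for some $\delta_s \in (0,1)$ that is independent of $\tau$, exactly as in \eqref{Msigma}; fed back into \eqref{oderhorho}, it reconfirms $\rho_+'(t;t_s) > 0$, so the subsonic-and-increasing regime is self-sustaining.

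The only remaining obstruction is a possible finite-time blow-up of $\rho_+$, and this is where the hypothesis $\gam \ge 2$ enters decisively. On $[t_s, \tau]$ the differential inequality \eqref{diff-inequ-sub} is valid, and for $\gam \ge 2$ it can be rewritten as
\begin{equation*}
(\rho_+^{\gam-2})'(t;t_s) \le (\gam-2)\,\frac{C_s(1+E_s)}{\kappa_s\,\delta_s},\qquad t \in [t_s,\tau]
\end{equation*}
(with the analogous logarithmic estimate when $\gam = 2$), whose right-hand side is a finite constant independent of $\tau$. Integrating produces a uniform upper bound for $\rho_+$ on $[t_s, t^*]$; combined with $\rho_+ \ge \rho_s$, $M_+^2 \le 1 - \delta_s < 1$, and the two-sided estimates for $E_+$ obtained by integrating the differential inequality for $(\hat t E)'$, this keeps the trajectory $(t, \rho_+, E_+)$ uniformly inside the region where the right-hand sides of \eqref{ivp-ds-sub} are $C^1$.

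If $t^* < T_*$, the uniform bounds above allow passage to the limit at $t = t^*$ with the strict inequalities $M_+^2(t^*;t_s) \le 1 - \delta_s < 1$ and $\rho_+(t^*;t_s) \ge \rho_s > 0$ preserved; the $C^1$ local-existence theorem applied to \eqref{ivp-ds-sub} at this limit state then extends the solution to $[t_s, t^*+\eta]$ for some $\eta > 0$, and rerunning the bootstrap gives \eqref{sub-condition} on the extension, contradicting the maximality of $t^*$. Hence $t^* = T_*$, which is the claim. The delicate ingredient, and the main obstacle, is precisely the conversion of \eqref{diff-inequ-sub} into a global $L^\infty$-bound for $\rho_+$: for $1 < \gam < 2$ the exponent $3 - \gam > 1$ permits superlinear self-feedback and an early breakdown, which is exactly the dichotomy already flagged in the Remark preceding the statement.
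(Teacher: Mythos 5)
Your proposal is correct and follows essentially the same route as the paper: the paper establishes exactly the three estimates you invoke — the uniform lower bound \eqref{teu2-estimate-crutial} on $\hat t E_+-\tfrac{m_0^2}{\hat t^2\rho_+^2}$ coming from $\rho_+\ge\rho_s$ and \eqref{inequality-lb-Eds}, the resulting separation $M_+^2\le 1-\delta_s$ via \eqref{odeMreu}, and the sublinear growth bound \eqref{diff-inequ-sub} with exponent $3-\gam\le 1$ for $\gam\ge 2$ — and then concludes "by the method of continuation," which is precisely the $t^*$-supremum argument you spell out. Your write-up merely makes the continuation step and the role of $\gam\ge 2$ (via the explicit integration of $(\rho_+^{\gam-2})'$) more explicit than the paper does.
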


	(Step 3) By Proposition \ref{promp} and Lemma \ref{lemma-nece-cond}, if we find $E_*\in[\ul E, \infty)$ depending only on $(\gam, r_0, r_1, \rho_0, u_0, p_0, b_0)$ to satisfy $F_s(t_s)\ge \beta_1$, then Theorem \ref{thmshock}(b) is proved. The rest of the proof is devoted to find such a constant $E_*$.

	\begin{lem}
\label{lemma-claim2}
	For each $t_s\in (0, T_*)$, set $q_s:=\frac{r_0}{\hat t_s}$.	
	For any given $\lambda>0$ and $\sigma>0$,
	there exists a constant $\ul E_*\in[\underline E, \infty)$ depending on
	$(\gam, r_0, r_1, \rho_0, u_0, p_0, b_0)$ and on $(t_s, \lambda, \sigma)$ such that, whenever $E_0\ge \ul E_*$, the solution to $\eqref{system-simplified1}$ with $(M_-,E_-)(0)=(M_0,E_0)$ satisfies
	\begin{align}\label{claim2}
	\hat t E_-(t)-\left(\frac{1}{2\ln(q_s+\sigma)}\right)\frac{\gam+1}{\gam-1}K_-(t)>\lambda\qd\tx{for all $t\in [0,t_s]$}.
	\end{align}

	\begin{proof}
	By Proposition \ref{proposition-supersonic}, for $E_0>\ul E$, the solution to $\eqref{system-simplified1}$ uniquely exists and satisfies $M_->1$ and $M_-^\p>0$ in $[0,T_*]$. Then, by \eqref{new-equation-E} and  $\eqref{kprime}$, we obtain that
	\begin{align*}
	%\label{EKclaimprime}
	\left(\hat t E_--\left(\frac{1}{2\ln(q_s+\sigma)}\right)\frac{\gam+1}{\gam-1}K_-
\right)^\p =\hat t\left(\mu_1\left(\frac{1}{\hat t^2M_-^2}\right)^{\frac{1}{\gam+1}}-b(t)\right)-\frac{E_-}{\ln(q_s+\sigma)}.
	\end{align*}
	Combine this with \eqref{E-Sup-ineq-2} to have
	\begin{align}
	\label{thm-pf-inequa1}
	\left(\hat t E_--\left(\frac{1}{2\ln(q_s+\sigma)}\right)
\frac{\gam+1}{\gam-1}K_-\right)(t)>r_0E_0\left(1-\frac{\ln q_s}{\ln(q_s+\sigma)}\right)-\alpha_2
	\end{align}
	for all $t\in [0,t_s]$, where a constant $\alpha_2>0$ is chosen depending on $(\gam, r_0, r_1, \rho_0, u_0, p_0, b_0)$.
	Since $1-\frac{\ln q_s}{\ln(q_s+\sigma)}>0$, one can choose a constant $\ul E_*\in[\ul E, \infty)$ depending on $(\gam, r_0, r_1, \rho_0, u_0, p_0, b_0)$ and on $(t_s, \lambda, \sigma)$ such that \eqref{thm-pf-inequa1} implies \eqref{claim2} whenever
	$E_0 \ge \ul E_*$.
	\end{proof}
	\end{lem}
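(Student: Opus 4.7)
The plan is to differentiate the combination $\hat t E_- - \frac{1}{2\ln(q_s+\sigma)}\frac{\gam+1}{\gam-1} K_-$, apply the upper bound on $E_-$ from \eqref{E-Sup-ineq-2}, and integrate over $[0,t]$. The initial value of this combination at $t = 0$ is $r_0 E_0 - \frac{\gam+1}{2(\gam-1)\ln(q_s+\sigma)} K_0$, which is linear in $E_0$ with leading coefficient $r_0$. The task reduces to showing that the negative contribution collected during integration grows at most like a strict fraction of $r_0 E_0$ on $[0, t_s]$, so that the net coefficient of $E_0$ stays strictly positive and can be made to dominate the prescribed constant $\lambda$.

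First I would combine $(\hat t E_-)' = h_2(t, M_-, E_-, \kappa_0)$ from \eqref{new-equation-E} with the identity $K_-' = \frac{2(\gam-1)}{\gam+1} E_-$ that follows from \eqref{kprime}, to obtain
\begin{equation*}
\left(\hat t E_- - \frac{1}{2\ln(q_s+\sigma)}\frac{\gam+1}{\gam-1} K_-\right)' = h_2(t, M_-, E_-, \kappa_0) - \frac{E_-}{\ln(q_s+\sigma)}.
\end{equation*}
Substituting the upper bound $E_-(t) \le (r_0 E_0 + \alpha_1)/(r_0 - t)$ from \eqref{E-Sup-ineq-2} into the last term, and noting that $|h_2(t, M_-, E_-, \kappa_0)|$ is bounded by a constant depending only on $(\gam, r_0, r_1, m_0, \kappa_0, b_0)$ since $M_- > 1$ gives $(1/\hat t^2 M_-^2)^{1/(\gam+1)} \le \hat t^{-2/(\gam+1)}$ and $\hat t \in [r_1, r_0]$, I would integrate over $[0, t]$ to get
\begin{equation*}
\hat t E_-(t) - \frac{1}{2\ln(q_s+\sigma)}\frac{\gam+1}{\gam-1} K_-(t) \ge r_0 E_0 - \frac{r_0 E_0 + \alpha_1}{\ln(q_s+\sigma)}\,\ln\frac{r_0}{r_0-t} - \beta_0
\end{equation*}
for some constant $\beta_0 > 0$ independent of $E_0$.

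For $t \in [0, t_s]$ one has $\ln(r_0/(r_0-t)) \le \ln q_s$, so the right-hand side is bounded below by $r_0 E_0\bigl(1 - \ln q_s/\ln(q_s+\sigma)\bigr) - \alpha_2$, with $\alpha_2$ depending only on $(\gam, r_0, r_1, \rho_0, u_0, p_0, b_0, t_s, \sigma)$. Since $q_s \ge 1$ and $\sigma > 0$, the factor $1 - \ln q_s/\ln(q_s+\sigma)$ is strictly positive, and choosing $E_0$ sufficiently large then forces the right-hand side to exceed $\lambda$; this determines $\underline E_*$. The main (mild) obstacle is the book-keeping: one must confirm that $\alpha_1$ from \eqref{E-Sup-ineq-2} and the uniform bound on $h_2$ are genuinely independent of $E_0$, which is the case by the structure of \eqref{new-equation-E} and Proposition \ref{proposition-supersonic}. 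Once this is checked, everything else reduces to elementary integration.
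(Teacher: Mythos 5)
Your proposal is correct and follows essentially the same route as the paper: differentiate the combination using \eqref{new-equation-E} and \eqref{kprime}, bound the $E_-$ term via the upper estimate in \eqref{E-Sup-ineq-2}, integrate to produce the lower bound $r_0E_0\bigl(1-\ln q_s/\ln(q_s+\sigma)\bigr)-\alpha_2$, and conclude from the strict positivity of $1-\ln q_s/\ln(q_s+\sigma)$. Your extra care about the $E_0$-independence of $\alpha_1$ and the bound on $h_2$ is exactly the bookkeeping the paper leaves implicit.
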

	
 	(Step 4) %We use Claim 1 and Claim 2 to prove Theorem $\ref{thmshock}$ (a).
	A direct computation with using $\eqref{rhc}$ yields that
 	\begin{align*}
 	u_s=u_-(t_s)\left(\frac{\gam-1}{\gam+1}+\frac{2}{(\gam+1)M_-^2(t_s)}\right).
 	\end{align*}
	Combine this expression with $u_-^2<\frac{\gam+1}{\gam-1}K_-$, $E_s=E_-(t_s)$, and $\ln \frac{r_0}{r_1}<\frac{2(\gam-1)}{\gam+1}$ to get
	
\begin{equation}
\label{tses1}
\begin{split}
F_s(t_s)
&>\hat t_s E_-(t_s)-\frac{\hat t_s^2}{r_1^2}
\left(\frac{\gam-1}{\gam+1}+\frac{2}{(\gam+1)M_-^2(t_s)}\right)^2\frac{\gam+1}{\gam-1}K_-(t_s)-\beta_1\\
	&>	\hat t_s E_-(t_s)-\left(\frac{e^{\frac{\gam+1}{2(\gam-1)}}}{q_s}\right)^2\left(\frac{\gam-1}{\gam+1}+\frac{2}{(\gam+1)M_-^2(t_s)}\right)^2\frac{\gam+1}{\gam-1}K_-(t_s)-\beta_1=:G_s(t_s)
	\end{split}
	\end{equation}
	for $F_s(t_s)$ from  \eqref{oderhorho}.

By $\eqref{teu2-estimate-crutial}$ and $\eqref{tses1}$, if $G_s(t_s)>0$ for all $t_s\in (0, T_*)$, then $(\rho_+, u_+, p_+, E_+)(t;t_s)$ satisfies \eqref{prop-sub-rad} for $t_s\le t<T_*$, for each $t_s\in(0, T_*)$ so that Theorem \ref{thmshock}(b) is proved.

%The right side of $\eqref{tses1}$ is determined by $(\rho_-,u_-,p_-,E_-)(t)$, so we focus on finding $E_0>0$ such that the solution to $\eqref{system-simplified1}$ with $(M_-,E_-)(0)=(M_0,E_0)$ satisfies the right side of  $\eqref{tses1}>0$.

It remains to find $\ul E_{\flat}\in [\ul E, \infty)$ depending on $(\gam, r_0, r_1, \rho_0, u_0, p_0, b_0)$ so that $G_s(t_s)>0$ for all $t_s\in(0, t_*)$, whenever $E_0\ge \ul E_{\flat}$. To find such a constant $\ul E_{\flat}$, the following lemma is needed.
\begin{lem}
\label{lemma-pineq}
For $\gam \ge 2$, it holds that
\begin{align}
	\label{pineq}
	2\Bigl(\frac{\gam-1}{\gam+1}\Bigr)^2
	 \Bigl(\frac{e^{\frac {\gam+1}{2(\gam-1)}}}{\xi}\Bigr)^2\ln \xi<1
		\end{align}
	for any $\xi\in[1, e^{\frac{\gam+1}{2(\gam-1)}})$.
	\begin{proof}
	
	Define $f(\xi):=2(\frac{\gam-1}{\gam+1})^2 {e^{\frac {\gam+1}{(\gam-1)}}} \frac{\ln \xi}{\xi^2} $ for $\xi>0$.
	%One can directly check that $f(1)=0$ and $f(e^{\frac {\gam+1}{2(\gam-1)}})=\frac{\gam-1}{\gam+1}<1$.
	Since
	$f'(\xi)=2e^{\frac{\gam-1}{\gam+1}}(\frac{\gam-1}{\gam+1})^2\frac{1}{\xi^3}(1-2\ln \xi),$ the maximum value of $f$ is acquired at $\xi=e^{\frac 12}$, that is, $f(\xi)\le f(e^{\frac 1 2})=e^{\frac{2}{\gam-1}}(\frac{\gam-1}{\gam+1})^2$ for $\xi>0$. Therefore, Lemma \ref{lemma-pineq} is proved if we show that $e^{\frac{2}{\gam-1}}(\frac{\gam-1}{\gam+1})^2<1$ for all $\gam\ge 2$.

	Set $g(\gam):=e^{\frac{1}{\gam-1}}(\frac{\gam-1}{\gam+1})$, then we have
	\begin{equation*}
	 g(2)=\frac{e}{3} <1\,\quad \lim_{\gam\to \infty} g(\gam)=1,
	 \quad\tx{and} \,\, \rm{sgn}\, g'(\gam)=\rm{sgn}\,(\gam-3).
	 \end{equation*}
	 This implies that $f(e^{\frac 1 2})=g(\gam)^2<1$ for $\gam\ge 2 $.
	 %This verifies the inequality $\eqref{pineq}$.

	\end{proof}

\end{lem}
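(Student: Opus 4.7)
The plan is to reduce \eqref{pineq} to an elementary single-variable calculus problem in $\xi$, and then to a one-variable monotonicity question in $\gam$. First, I would pull the $\gam$-dependent factors out of the left-hand side of \eqref{pineq} and write the inequality in the form
\[
2\Bigl(\tfrac{\gam-1}{\gam+1}\Bigr)^2 e^{\frac{\gam+1}{\gam-1}}\,\varphi(\xi)<1,\qquad \varphi(\xi):=\tfrac{\ln\xi}{\xi^2}.
\]
A short computation gives $\varphi'(\xi)=\xi^{-3}(1-2\ln\xi)$, so $\varphi$ attains its maximum on $(0,\infty)$ at $\xi_*=e^{1/2}$ with $\varphi(\xi_*)=\tfrac{1}{2e}$. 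Since $\tfrac{\gam+1}{2(\gam-1)}>\tfrac{1}{2}$ for every $\gam>1$, the point $\xi_*$ always lies in the admissible interval $[1,e^{\frac{\gam+1}{2(\gam-1)}})$, so it suffices to bound the left-hand side of \eqref{pineq} by its value at $\xi_*$.

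Substituting $\xi=\xi_*$ and simplifying, the lemma reduces to showing
\[
h(\gam):=e^{\frac{1}{\gam-1}}\,\tfrac{\gam-1}{\gam+1}<1\qquad\tx{for all } \gam\ge 2,
\]
after taking square roots of a positive quantity. I would establish this by differentiating $\ln h(\gam)=\tfrac{1}{\gam-1}+\ln(\gam-1)-\ln(\gam+1)$ to obtain
\[
(\ln h)'(\gam)=\tfrac{\gam-3}{(\gam-1)^2(\gam+1)},
\]
so $h$ decreases on $[2,3]$ and then increases on $[3,\infty)$. Combined with the boundary values $h(2)=e/3<1$ and $\lim_{\gam\to\infty}h(\gam)=1$ (the limit never attained), this yields $h(\gam)<1$ throughout $[2,\infty)$, and hence \eqref{pineq}.

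The only delicate point I expect is that the inequality is asymptotically tight as $\gam\to\infty$, so the argument must keep strict inequality uniformly rather than rely on a single numerical bound at one value of $\gam$. This is handled cleanly by the explicit sign analysis of $(\ln h)'$, which shows $h$ approaches but never reaches its supremum $1$. Everything else is routine one-variable calculus, the only structural check being that $\xi_*=\sqrt{e}$ sits uniformly inside the admissible interval for every $\gam\ge 2$, which follows immediately from $\tfrac{\gam+1}{2(\gam-1)}>\tfrac{1}{2}$.
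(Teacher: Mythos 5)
Your proposal is correct and follows essentially the same route as the paper: maximize $\ln\xi/\xi^2$ at $\xi=\sqrt{e}$, reduce to the one-variable inequality $e^{\frac{1}{\gam-1}}\frac{\gam-1}{\gam+1}<1$, and settle that by the sign of the logarithmic derivative ($\mathrm{sgn}(\gam-3)$) together with the endpoint value $e/3<1$ and the unattained limit $1$ at infinity. The only cosmetic difference is your explicit check that $\sqrt{e}$ lies in the admissible interval, which the paper bypasses by bounding over all $\xi>0$.
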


(Step 5) Suppose that $t_s\in[\zeta, T_*)$ for some $\zeta\in(0, T_*)$.
\smallskip

{\emph{Claim. There exists a constant $\underline{E}^{*}_{(\zeta)}$ depending on $(\gam, r_0, r_1, \rho_0, u_0, p_0, b_0, \zeta)$ such that whenever $E_0\ge \underline{E}^{*}_{(\zeta)}$, we have
\begin{equation}
\label{gs-zeta}
G_s(t_s)>\beta_1\quad\tx{for $t_s\in[\zeta, T_*)$}
\end{equation}
for the constant $\beta_1>0$ from \eqref{inequality-lb-Eds}.
}}
	\smallskip

By Lemma \ref{lemma-pineq}, there exist constants $\sigma_{\gam}>0$ and $\underline M_{\gam}>1$ depending only on $\gam$ such that
	\begin{align}\label{qineq}
	\left(\frac{e^{\frac{\gam+1}{2(\gam-1)}}}{q_s}\right)^2
\left(\frac{\gam-1}{\gam+1}+\frac{2}{(\gam+1)\underbar M_{\gam}^2}\right)^2<\frac{1}{2\ln (q_s+\sigma_{\gam})} \qd\tx{for all $t_s\in [\zeta, T_*)$}.
	\end{align}

One can choose $E^{(1)}_{\zeta} \in[\ul E, \infty)$ depending on $(\gam, r_0, r_1, \rho_0, u_0, p_0, b_0, \zeta)$ so that 	
	whenever $E_0> E^{(1)}_{\zeta}$, the solution to $\eqref{system-simplified1}$ with $(M_-,E_-)(0)=(M_0,E_0)$ satisfies
			\begin{align}
\label{inequaliy-M-zeta}
			M_-^2(t_s)\ge M_-^2(\zeta)>\underline M_{\gam}^2\quad\tx{for all $t_s\in[\zeta, T_*).$}
			\end{align}
This can be verified as follows: Following the proof of Proposition $\ref{proposition-supersonic}$, one can directly check that, for any constant $\lambda_1>0$, there exists a constant $\ul{E}_{\lambda_1}\in [\ul E, \infty)$ depending on $(\gam, r_0, r_1, \rho_0, u_0, p_0, b_0, \lambda_1)$ such that, whenever $E_0\ge \ul E_{\lambda_1}$, the solution to $\eqref{system-simplified1}$ with $(M_-,E_-)(0)=(M_0,E_0)$ satisfies $\hat tE_- -K_->\lambda_1$ for all $t\in [0,T_*]$. Since, for $E_0>\underline E$, $M_-^\p>0$ in $[0,r_0-r_1]$, by $\eqref{equation-M-another}$,
	\begin{align*}
	(M^2)^\p> \mu_2 (M^2)^{\frac{\gam-1}{\gam+1}} \lambda_1\qd\tx{in $[0,T]$}
	\end{align*}
	for a constant $\mu_2>0$ depending only on $(\gam, r_0, r_1, \rho_0, u_0, p_0)$. By integrating this inequality over the interval $[0,\zeta]$, it is obtained that
\begin{equation*}	(M_-^2)^{\frac{2}{\gam+1}}(\zeta)
>(M_0^2)^{\frac{2}{\gam+1}}+\frac{2\lambda_1\mu_2\zeta}{\gam+1},
\end{equation*}
therefore, one can choose $\lambda_1>0$ large depending on $(\gam, r_0, r_1, \rho_0, u_0, p_0, \zeta)$ to satisfy \eqref{inequaliy-M-zeta}. For such $\lambda_1$, we choose as $E^{(1)}_{\zeta}=\ul{E}_{\lambda_1}$.

	By \eqref{qineq} and \eqref{inequaliy-M-zeta}, if $E_0\ge E^{(1)}_{\zeta}$, then we get
	\begin{align}\label{rek5}\begin{split}
	G_s(t_s)>t_sE_-(t_s)-\left(\frac{1}{2\ln (q_s+\sigma_\gam)}\right)\frac{\gam+1}{\gam-1}K_-(t_s)-\beta_1\qd\tx{for all $t_s\in [\zeta,T]$}.
	\end{split}
	\end{align}
We apply Lemma \ref{lemma-claim2} with choosing $\lambda=2\beta_1$ to conclude that there exists a constant $E^{(2)}_{\zeta}\in[E^{(1)}_{\zeta}, \infty)$ depending on $(\gam, r_0, r_1, \rho_0, u_0, p_0, b_0, \zeta)$ such that
\begin{equation}
\label{rek6}
  G_s(t_s)>\beta_1\quad\tx{for all $t_s\in[\zeta, T_*)$.}
\end{equation}
By choosing as $\ul E^*_{(\zeta)}= E^{(2)}_{\zeta}$, {\emph{Claim}} is verified.
	
%Let $E_\zeta:=\max(\hat E,\max_{t_s\in [\zeta,T]}\ov E_{t_s})$. Then, since $\ov E_{t_s}$ is finite for each $t_s\in [\zeta,r_0-r_1]$, $E_\zeta$ is finite, and $ E_\zeta$ depends on $(m_0,\kappa_0,M_0,r_0,r_1,\gam,b_0,\zeta,\underbar M_\zeta, \sigma_1)$. By $\eqref{rek5}$ and $\eqref{rek6}$, we conclude that, for any $E_0>\tilde E_\zeta$, the right side of \eqref{tses1}$>0$ for all $t_s\in [\zeta,T]$.

(Step 6) {\emph{Claim: One can choose constants $\zeta_*\in(0, T_*)$ and $E^{(3)}_{\zeta_*}\in[\ul E, \infty)$ depending on $(\gam, r_0, r_1, \rho_0, u_0, p_0, b_0)$ so that whenever $E_0\ge E^{(3)}_{\zeta_*}$, we have
\begin{equation*}
  G_s(t_s)>\beta_1\quad\tx{for all $t_s\in (0, \zeta_*]$.}
\end{equation*}}}
%Case 2. $t_s\in [0,\zeta]$, i.e, $q\in [1,\frac{r_0}{r_0-\zeta}]$.\\

For $q_s=\frac{r_0}{r_0-t_s}\in (1,\frac{r_0}{r_0-\zeta_*}]$, or equivalently $t_s\in(0,\zeta_*]$, we have
	\begin{equation}\label{rek2}
	G_s(t_s)>
\hat t_s E_-(t_s)-e^{\frac{\gam+1}{\gam-1}}\left(\frac{\gam-1}{\gam+1}
+\frac{2}{(\gam+1)M_-^2(t_s)}\right)^2\frac{\gam+1}{\gam-1}K_-(t_s)-\beta_1.
	\end{equation}
	Choose $\zeta_*\in(0, T_*)$ sufficiently small to satisfy	$\frac{1}{2\ln\left(\frac{r_0}{r_0-\zeta_*}\right)}
>e^{\frac{\gam+1}{\gam-1}}
\left(\frac{\gam-1}{\gam+1}+\frac{2}{(\gam+1)M_0^2}\right)$, then choose a constant $\sigma_{\zeta_*}>0$ to satisfy	$\frac{1}{2\ln\left(\frac{r_0}{r_0-\zeta_*}+\sigma_{\zeta_*}\right)}>e^{\frac{\gam+1}{\gam-1}}\left(\frac{\gam-1}{\gam+1}+\frac{2}{(\gam+1)M_0^2}\right)$.
Such a constant $\zeta_*$ can be chosen depending on $(\gam, r_0, \rho_0, u_0, p_0)$, and so does $\sigma_{\zeta_*}$.	
By \eqref{rek2}, the choice of $\zeta_*$, and Lemma \ref{lemma-claim2}, there exists a constant $E^{(3)}_{\zeta_*}\in[\ul E, \infty)$ depending only on $(\gam, r_0, r_1, \rho_0, u_0, p_0, b_0)$ so that whenever $E_0\ge E^{(3)}_{\zeta_*}$, we have
	\begin{align}\label{rek4}
	\begin{split}
	G_s(t_s)&>\hat t_s E_-(t_s)-e^{\frac{\gam+1}{\gam-1}}\left(\frac{\gam-1}{\gam+1}+\frac{2}{(\gam+1)M_0^2}\right)^2\frac{\gam+1}{\gam-1}K_-(t_s)-\beta_1
	\\&>\hat t E_-(t_s)-\left(\frac{1}{2\ln(\frac{r_0}{r_0-\zeta_*}+\sigma_{\zeta_*})}\right)\frac{\gam+1}{\gam-1}K_-(t_s) -\beta_1\\
&>\beta_1 \qd\tx{for all $t_s\in [0,\zeta_*]$}.
	\end{split}
	\end{align}
The claim is verified.

(Step 7) Finally, we choose $E_*$ for Theorem \ref{thmshock}(b) as
\begin{equation*}
  E_*=\max\{\ul E^*_{(\zeta_*/2)}, E^{(3)}_{\zeta_*}\},
\end{equation*}
for $\ul E^*_{(\zeta)}$ from step 5, so that the function $G_s(t_s)$ from \eqref{tses1} satisfies $G_s(t_s)\ge \beta_1$ for all $t_s\in (0, T_*)$, thus the condition \eqref{nece-condi1} holds for all $t_s\in (0, T_*)$ whenever $E_0\ge E_*$.

%Let $E^\#:=\max(E_\zeta,\tilde E_\zeta)>0$. Then, $E^\#$ is finite and depends on $(m_0,\kappa_0,M_0,r_0,r_1,\gam,b_0,$ $\zeta,\sigma_1,\sigma_2,\ul M_\zeta)$ but independent of $t_s$. By Proposition $\ref{proposition-supersonic}$, since $E^\#>\ul E$, for any $E_0>E^\#$, $(\rho_-,u_-,p_-,E_-)(t)$ uniquely exists and satisfies $M_-(t)>1$ and $M_-^\p(t)>0$ for $0\le t \le r_0-r_1$. Moreover, by (Step 4) and (Step 5), for any $E_0>E^\#$, $(\rho_+,u_+,p_+,E_+)(t;t_s)$ uniquely exists and satisfies $\rho_+(t;t_s)>0$, $\rho_+^\p(t;t_s)>0$ and $0<M_+(t;t_s)<1$ for $t\in [t_s,r_0-r_1]$ for all $t_s\in [0,T]$. %The unique existence of supersonic and subsonic solution directly follows from Proposition $\ref{proposition-supersonic}$ and Claim 1.
	%Therefore, by Proposition $\ref{promp}$, we conclude that $\frac{dp_+}{dt_s}(T;t_s)<0$ for all $t_s\in [0,T]$.
	%\begin{align*}
	%\frac{dp_+}{dt_s}(T;t_s)<0\qd\tx{for all $t_s\in [0,T]$}.
	%\end{align*}
	This finishes the proof of Theorem \ref{thmshock} (b).
\end{proof}

%\bigskip
%{\bf Acknowledgement.}
%The research of Myoungjean Bae was supported in part by Samsung Science and Technology Foundation under Project Number SSTF-BA1502-02. The research of Yong Park was supported in part under the framework of international cooperation program managed by National Research Foundation of Korea(NRF-2015K2A2A2002145).

\bigskip


\begin{thebibliography}{9}
	
\bibitem{tsmBae11}
	M. Bae, and M. Feldman, {\it Transonic Shocks in Multidimensional Divergent Nozzles} {\em Arch. Ration. Mech. Anal.} {\bf 201}(2011), no. 3, 777--840.


\bibitem{DM}
P. Degond, and P.A. Markowich,  {\it On a one-dimensional steady-state hydrodynamic
model for semiconductors.} {\em Appl. Math. Lett.} {\bf 3} (1990)(3), 25--29.


\bibitem{Gamba}
I. M. Gamba, {\it Stationary transonic solutions of a one-dimensional hydrodynamic model
for semiconductors.} {\em Commun. Partial Differ. Equ.} {\bf 17}(1992)(3--4), 553--577


\bibitem{DE2}
	M.W. Hirsch, S. Smale,and R.L. Devney, {\it Differential Equations, Dynamical Systems and An introduction to Chaos, 2nd edn.} Elsevier, USA, 2004

\bibitem{TLiu}
T.-P. Liu, {\it Nonlinear stability and instability of tranosnic gas flow through a nozzle.} {\em
Commun. Math. Phys.} {\bf 83}(1982) 243--260


\bibitem{LRXX}
T. Luo, J. Rauch, C. Xie, and Z. Xin, {\it Stability of Transonic Shock Solutions for One-Dimensional Euler-Poisson Equations.} {\em Arch. Ration. Mech. Anal.} {\bf 202} (2011) 787--827

\bibitem{LX}
T. Luo, and Z. Xin, {\it Transonic shock solutions for a system of Euler-Poisson equations.} {\em
Commun. Math. Sci.} {\bf 10} (2012) no. 2, 419--462


\bibitem{HYuan}
H.R. Yuan, {\it A remark on determination of transonic shocks in divergent nozzle for
steady compressible Euler flows.} {\em Nonlinear Anal. Real World Appl.} (2008) 9(2), 316--25

\end{thebibliography}
\end{document}